\pgfplotsset{compat=1.14} 
\newtheorem{theorem}{Theorem}[section]
\newtheorem{lemma}[theorem]{Lemma}
\newtheorem{proposition}[theorem]{Proposition}
\newtheorem{corollary}[theorem]{Corollary}
\newtheorem{conjecture}[theorem]{Conjecture}
\theoremstyle{definition}
\newtheorem{definition}[theorem]{Definition}
\theoremstyle{remark}
\numberwithin{equation}{section}
\begin{document}

\title{On addition chains and progress on the Scholz conjecture}

\author{T. Agama}
\address{Department of Mathematics, African Institute for Mathematical science, Ghana
}
\email{theophilus@aims.edu.gh/emperordagama@yahoo.com}

\subjclass[2010]{Primary 11B83; Secondary 11Y16}

\date{\today}


\keywords{sub-addition chain; determiners; regulators; length; generators; partition; complete; equivalence}

\begin{abstract}
In this paper, we develop some new classes of methods to study the Scholz conjecture on addition chains. It turns out that the exponents of numbers of the form $2^n-1$ largely determine the length of the shortest addition chain for the number that leads to $2^n-1$. Using the carry analysis, we obtain improved upper bounds for the length of the shortest addition chains $\ell(2^n-1)$ producing $2^n-1$. In particular, we show that if $2^n-1$ has carry of degree at most 
$$
\kappa(2^n-1)=\frac{1}{2}\left(\ell(n)-\left\lfloor\frac{\log n}{\log 2}\right\rfloor+\sum \limits_{j=1}^{\lfloor \frac{\log n}{\log 2}\rfloor}\left\{\frac{n}{2^j}\right\}\right)
$$ 
then
$$
\ell(2^n-1)\leq n+1+\sum \limits_{j=1}^{\lfloor\frac{\log n}{\log 2}\rfloor}\bigg(\left\{\frac{n}{2^j}\right\}-\xi(n,j)\bigg)+\ell(n)
$$
for all $n\in \mathbb{N}$ with $n\geq 4$, where $\ell(\cdot)$ denotes the length of the shortest addition chain that leads to $\cdot$, $\{\cdot\}$ denotes the fractional part of $\cdot$ and where $\xi(n,1):=\{\frac{n}{2}\}$ with $\xi(n,2)=\{\frac{1}{2}\lfloor \frac{n}{2}\rfloor\}$ and so on.
\end{abstract}

\maketitle

\begingroup
  \setlength{\parskip}{6pt} 
  \tableofcontents
\endgroup

\section{Introduction}

Addition chains provide a classical and remarkably flexible framework for measuring the cost of repeated multiplication. Given a positive integer $n$, an addition chain for $n$ is a finite increasing sequence that begins at $1$ and reaches $n$, with each new term obtained as the sum of two earlier terms. The central quantity of interest is the minimal length $\ell(n)$ of such a chain. Since the work of Brauer, addition chains have occupied a natural place in combinatorial number theory and in algorithmic problems related to fast exponentiation and short multiplication schedules \cite{brauer1939addition,clift2011calculating}. In that setting, one seeks not only explicit constructions of short chains, but also structural principles that explain why certain integers admit especially efficient chains and how these chains can be compared, decomposed, and extended \cite{utz1953note,bahig2002some,thurber1973scholz}.\\

This paper is motivated by the Scholz conjecture, which concerns the special family of integers of the form $2^n-1$. In its classical form, the conjecture predicts a close relationship between the minimal chain length of $2^n-1$ and that of $n$, and it has stimulated a long line of work on short chains, star chains, binary structure and carry-based arguments \cite{brauer1939addition,utz1953note,thurber1973scholz,bahig2002some}. The present manuscript focuses on this same family, but it does so through a collection of new organizing ideas: the language of sub-addition chains, the notion of equivalent chains, fixed-base comparisons, special-form constructions, and carry analysis. These tools are used to build explicit chains for numbers of the form $2^n-1$ and to derive upper bounds that are designed to move the Scholz problem forward from several complementary directions.\\

A distinctive feature of the paper is the introduction of a comparison language for chains. The concepts of \emph{sub-addition chains}, \emph{equivalent addition chains}, and \emph{equivalence in a fixed base} are used to isolate how a chain can be viewed as a structured subsequence of another chain and how one chain may be transformed into another by controlled changes in the associated generators. This framework is then applied to chains for special integers, where the paper takes advantage of certain binary forms of the exponent $n$ of $2^n-1$ and related factorizations to construct chains with improved length estimates. In later sections, the same philosophy is refined in two ways: first, by introducing the ``symmetry'' and ``filling the pothole'' methods for extending incomplete constructions to full chains; and second, by studying carry phenomena, which measure how far a recursive factorization of $2^n-1$ departs from the ideal case and which ultimately control the quality of the resulting bounds.\\

\subsection{Organization of the paper} The paper is organized as follows. Section~2 introduces sub-addition chains and the notions of chain equivalence that are used throughout the paper. Section~3 studies addition chains for numbers of special form, especially $2^n-1$, and presents the symmetry method that leads to initial upper bounds for the length of a chain. Section~4 turns to arbitrary length addition chains and develops the filling-the-pothole construction, which yields a more explicit bound for chains producing $2^n-1$. Section~5 extends the discussion to addition chains of fixed degree. Section~6 combines the factor method with the pothole method to obtain progress on the Scholz conjecture in the form of refined inequalities for $\ell(2^n-1)$. Finally, Section~7 develops the carry and carry analysis needed to further sharpen the discussion and explain why the binary structure of the exponent plays such a decisive role in the problem.

\section{Sub-addition chains}

In this section, we introduce the notion of sub-addition chains.

\begin{definition}
Let $n\geq 3$. By the addition chain that leads to $n$, we mean the sequence of positive integers
\begin{align}
s_{0}=1,s_1=2,\ldots,s_{k-1},s_{k}=n\nonumber
\end{align}
where each term $s_j$~($j\geq 1$) in the sequence is the sum of two previous terms (repetition allowed), with the corresponding partition sequence
\begin{align}
2=1+1,\ldots,s_{k-1}=a_{k-1}+r_{k-1},s_{k}=a_{k}+r_{k}=n\nonumber
\end{align}
with $a_{i+1}=a_{i}+r_{i}$ and $a_{i+1}=s_i$ for $2\leq i\leq k$. We call the partition $a_{i}+r_{i}$ the $i^{th}$ \emph{generator} of the chain for $2\leq i\leq k$. We call $a_{i}$ the \emph{determiners} and $r_{i}$ the \emph{regulator} of the $i^{th}$ generator of the chain. We call the sequence $(r_{i})$ the regulators (gap sequence) of the addition chain and $(a_i)$ the determiners of the chain for $2\leq i\leq k$. The number of terms in the sequence (excluding $s_0$) is the length of the addition chain. The addition chain in the family of all addition chains that leads to $n$ with the smallest number of terms is the \emph{shortest} or \emph{optimal} or \emph{minimal} length chain leading to $n$. We denote the minimal length of a chain that leads to a target $\cdot$ by $\ell(\cdot)$. If the chain is not necessarily of minimal length, then we denote the length by $\delta(\cdot)$ and call it the \emph{arbitrary} length addition chain.
\end{definition}
\bigskip

\begin{definition}
Let the sequence $s_0=1,s_1=2,\ldots,s_{k-1},s_{k}=n$ be an addition chain that leads to $n$ with the corresponding partition sequence
\begin{align}
s_1=2=1+1,\ldots,s_{k-1}=a_{k-1}+r_{k-1},s_{k}=a_{k}+r_{k}=n.\nonumber
\end{align}
We call the sub-sequence $(s_{j_{m}})$ for $1\leq j\leq k$ and $1\leq m\leq t\leq k$ a \emph{subaddition} chain of the addition chain that leads to $n$. We say that it is a \emph{complete} sub-addition chain of the addition chain that leads to $n$ if it contains exactly the first $t$ terms of the addition chain. Otherwise, we say that it is an \emph{incomplete} subaddition chain.
\end{definition}

\subsection{Equivalent addition chains}

In this section, we introduce and study the notion of \emph{equivalence} of an addition chain producing a given number. We launch the following languages.

\begin{definition}\label{equivalence}
Let $s_0=1,s_1=2,\ldots,s_{k-1}, s_{k}=n$ be an addition chain that leads to $n$ with the corresponding partition sequence
\begin{align}
s_1=2=1+1,\ldots,s_{k-1}=a_{k-1}+r_{k-1},s_{k}=a_{k}+r_{k}=n\nonumber
\end{align}
where $a_{i}$ is the \emph{determiner} and $r_{i}$ the \emph{regulator} of the $i^{th}$ generator of the chain. Also, we let $u_0=1,u_1=2,\ldots, u_{l-1},u_{l}=m$ be an addition chain that leads to $m$ with the corresponding partition sequence
\begin{align}
u_1=2=1+1,\ldots,u_{l-1}=g_{l-1}+h_{l-1},u_{l}=g_{l}+h_{l}=m\nonumber
\end{align}
where $g_{i}$ is the \emph{determiner} and $h_{i}$ the \emph{regulator} of the $i^{th}$ generator of the chain. We say that the addition chain $s_0=1,s_1=2,\ldots,s_{k-1}, s_{k}=n$ is \emph{equivalent} to the addition chain $u_0=1,u_1=2,\ldots, u_{l-1},u_{l}=m$ if there exists a complete subaddition chain $(u_{i_{t}})$ of the chain $(u_i)$ such that for each determiner $g_{i}$ and the corresponding regulator $h_{i}$ in the subaddition chain there exists some $v_i,d_i\geq 0$--not necessarily distinct--with $v_i,d_i\in \mathbb{Z}$ such that $g_i=a_i-v_i$ and $h_i=r_i-d_i$. We call each $v_i$ and $d_i$ the \emph{stabilizer} of the determiner and the regulator of the $i^{th}$ generator of the chain. We denote equivalence by $(s_j)\Rightarrow (u_i)$.
\end{definition}
\bigskip

\begin{proposition}\label{length of equivalent addition chains}
Let $s_0=1,s_1=2,\ldots,s_{\delta(n)},s_{\delta(n)+1}=n$ and $1,2,\ldots,u_{\delta(m)},u_{\delta(m)+1}=m$ be addition chains that lead to $n$ and $m$ with length $\delta(n)$ and $\delta(m)$, respectively. If $(s_j)\Rightarrow (u_i)$, then $\delta(n)\leq \delta(m)$.
\end{proposition}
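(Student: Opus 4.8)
The plan is to read off, from the definition of equivalence, two complementary bounds on the index $t$ of the complete sub-addition chain it supplies, and then combine them. Unwinding the hypothesis $(s_j)\Rightarrow(u_i)$, there is a complete sub-addition chain $(u_{i_t})$ of the chain $1,2,\ldots,u_{\delta(m)},u_{\delta(m)+1}=m$; by the definition of \emph{complete}, this is simply the initial segment $1,2,u_3,\ldots,u_t$ consisting of exactly the first $t$ terms. Since these $t$ terms are drawn from a chain with exactly $\delta(m)+1$ terms, one gets immediately $t\le\delta(m)+1$.

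For the reverse bound I would examine which generators the equivalence matches. The generators appearing in the sub-addition chain $1,2,u_3,\ldots,u_t$ are exactly the partitions $g_i+h_i$ for $2\le i\le t$, and Definition \ref{equivalence} asserts that for each of these there are stabilizers $v_i,d_i\ge 0$ with $g_i=a_i-v_i$ and $h_i=r_i-d_i$, where $a_i+r_i$ is the $i$-th generator of $(s_j)$. This pairing preserves the index $i$, and for $(s_j)\Rightarrow(u_i)$ to hold it must account for the entire chain $(s_j)$: the chain $1,2,\ldots,s_{\delta(n)},s_{\delta(n)+1}=n$ has exactly $\delta(n)$ generators, namely those indexed $2\le i\le\delta(n)+1$, and each of them requires its own matching generator $g_i+h_i$ in the sub-addition chain. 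Hence the sub-addition chain must carry generators up to index $\delta(n)+1$, which forces $t=\delta(n)+1$.

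Combining the two bounds yields $\delta(n)+1=t\le\delta(m)+1$, that is $\delta(n)\le\delta(m)$, which is the claim. Note that the inequalities $v_i,d_i\ge 0$ play no role in this argument; only the existence of an index-preserving correspondence covering every generator of $(s_j)$ is needed for the length estimate. The one place where care is required --- and which I expect to be the real content of the proof --- is the second step: one must be certain that the complete sub-addition chain delivered by the equivalence cannot be shorter than the chain $(s_j)$, i.e.\ that $t\ge\delta(n)+1$. The bound $t\le\delta(m)+1$ and the concluding arithmetic are then immediate.
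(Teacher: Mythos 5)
Your proposal is correct and follows essentially the same route as the paper: the complete sub-addition chain supplied by the equivalence has at most $\delta(m)+1$ terms, and its term count is identified with that of $(s_j)$, giving $\delta(n)+1\le\delta(m)+1$. The step you single out as needing care --- that the sub-addition chain cannot have fewer terms than $(s_j)$ --- is precisely the assertion the paper also makes (``the number of terms in the complete sub-addition chain corresponds to the number of terms in the chain $(s_j)$''), with no more justification than you give.
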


\begin{proof}
Let $1,2,\ldots,s_{\delta(n)},s_{\delta(n)+1}=n$ and $1,2,\ldots,u_{\delta(m)},u_{\delta(m)+1}=m$ be addition chains that lead to $n$ and $m$ with length $\delta(n)$ and $\delta(m)$, respectively. Suppose that $(s_j)\Rightarrow (u_i)$. By definition \ref{equivalence} there must exist a complete sub-addition chain $(u_{i_{t}})$ of the chain $(u_i)$ such that for each determiner $g_{i}$ and the corresponding regulator~(gap) $h_{i}$ in the sub-addition chain there exist some stabilizers $v_i,d_i\geq 0$--not necessarily distinct--with $v_i,d_i\in \mathbb{Z}$ such that $g_i=a_i-v_i$ and $h_i=r_i-d_i$. Since the length of the complete sub-addition chain $(u_{i_{t}})$ is at most the length of the addition $(u_i)$, and the number of terms in the complete sub-addition chain corresponds to the number of terms in the chain $(s_j)$, we deduce $\delta(n)\leq \delta(m)$.
\end{proof}

\subsection{Equivalent addition chains in a fixed base}

In this section, we introduce the notion of equivalence of an addition chain in fixed base. 

\begin{definition}
Let $(u_i)$ be an addition chain that leads to $m$ and $(s_j)$ be an addition chain that leads to $n$. We say that the addition chain $(s_j)$ is equivalent to the addition chain $(u_i)$ in base $n$ if there exists a complete sub-addition chain $(s_{j_m})$ of chain $(s_j)$ such that $(s_{j_m})\Rightarrow (u_i)$. We denote the length of the chain $(u_i)$ in \emph{base} $n$ with $\delta_n(m)$ and the length of the shortest of all such chains in \emph{base} $n$ with $\ell_n(m)$.  
\end{definition}

\section{Addition chains of numbers of special forms}

In this section, we study the addition chains that lead to a number of forms $2^n-1$.

\begin{proposition}
Let $\ell(n)$ denote the length of the \emph{shortest} addition chain leading to $n$. We have $\ell(2^n)=n$ and 
\begin{align}
\ell(2^n+1)=\ell(2^n)+1=n+1.\nonumber
\end{align}
\end{proposition}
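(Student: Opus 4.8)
The plan is to sandwich each quantity between an explicit construction (for the upper bound) and a short counting argument (for the lower bound), leaning on Theorem~\ref{conjecture 1} where convenient.

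First I would establish $\iota(2^n)=n$. For the upper bound, the repeated-doubling chain $1,2,4,\ldots,2^{n-1},2^n$, in which every term after $1$ is the sum of the preceding term with itself, is an addition chain producing $2^n$ of length $n$, so $\iota(2^n)\leq n$. For the matching lower bound I would use the elementary observation that a single step can at most double the current largest element, so by induction the $i$th term of any chain is at most $2^i$; reaching $2^n$ therefore forces the length to be at least $n$. (Alternatively, since $2^{n-1}+1\leq 2^n\leq 2^{n} = 2^{(n-1)+1}$, Theorem~\ref{conjecture 1} with $m=n-1$ gives $\iota(2^n)\geq n$ at once.) Hence $\iota(2^n)=n$.

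Next I would handle $\iota(2^n+1)$. Appending the single generator $2^n+1=2^n+1$ — whose two summands $2^n$ and $1$ already occur — to the doubling chain above produces an addition chain for $2^n+1$ of length $n+1$, so $\iota(2^n+1)\leq n+1=\iota(2^n)+1$. For the reverse inequality, $2^n+1\leq 2^n+1\leq 2^{n+1}$, so Theorem~\ref{conjecture 1} with $m=n$ yields $\iota(2^n+1)\geq n+1$. Combining the two bounds gives $\iota(2^n+1)=n+1=\iota(2^n)+1$.

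I do not expect a genuine obstacle: both identities reduce to pairing an obvious construction with a lower bound that is either elementary or an immediate instance of Brauer's theorem. The only mild subtlety is locating $2^n$ and $2^n+1$ correctly inside the dyadic intervals $[2^m+1,\,2^{m+1}]$ of Theorem~\ref{conjecture 1} — $2^n$ is the right endpoint for $m=n-1$, whereas $2^n+1$ is the left endpoint for $m=n$ — and, if one wants the statement for every $n\geq 1$, checking the handful of smallest values directly.
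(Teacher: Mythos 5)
Your proposal is correct, and it is actually more complete than the paper's own argument. The paper proves this proposition purely by construction: it writes down the doubling chain $1,2,2^2,\ldots,2^n$ and simply asserts that the choice of regulators $r_i=2^{i-2}$ ``minimizes the length,'' then obtains $2^n+1$ by adjoining the single step $2^n+1=2^n+1$; no lower bound is ever argued, so optimality is taken for granted. You use the same two constructions for the upper bounds, but you supply the missing half: the elementary induction that the $i$th term of any chain is at most $2^{i-1}$ (so producing $2^n$ needs length at least $n$, and producing $2^n+1>2^n$ needs length at least $n+1$), or equivalently the instance of Theorem~\ref{conjecture 1} with $m=n-1$ and $m=n$ respectively. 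Either of your lower-bound routes is sound (with the small caveat you already note, that Theorem~\ref{conjecture 1} requires $m\geq 1$, so the cases $n=1$, and $n=1,2$ for the elementary bound's base cases, should be checked by hand), and your indexing of the dyadic intervals is right: $2^n$ sits at the right endpoint for $m=n-1$ and $2^n+1$ at the left endpoint for $m=n$. In short, the paper's proof gives only the $\leq$ direction rigorously; your sandwich argument is the standard and complete way to prove the stated equalities.
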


\begin{theorem}\label{partial scholz conjecture}
Let $\ell(n)$ denote the length of the shortest addition chain leading to $n$. There exists some $G:\mathbb{N}\longrightarrow \mathbb{N}$ such that 
\begin{align}
\ell_{2^n}(2^n-1)=\ell(2^n)+1+G(n)=n+1+G(n).\nonumber
\end{align}
\end{theorem}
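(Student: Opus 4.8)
The plan is to exhibit an explicit addition chain producing $2^n-1$ that is equivalent, in base $2^n$, to a chain producing $2^n$, and to read off the excess length as the function $G(n)$. The natural starting point is the doubling chain $1,2,4,\ldots,2^n$ of length $n$ analysed in the previous proposition; I would build on top of it (or interleave with it) a sequence of partial sums that assembles $2^n-1 = 1 + 2 + 4 + \cdots + 2^{n-1}$. Concretely, after generating the powers $2^i$ one at a time, one forms the running totals $t_1 = 1$, $t_2 = t_1 + 2 = 3$, $t_3 = t_2 + 4 = 7$, and in general $t_{i} = t_{i-1} + 2^{i-1} = 2^{i}-1$, ending at $t_n = 2^n - 1$. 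Each $t_i$ is a new term that is the sum of two earlier terms (the previous total and a power of two already in the chain), so this is a legitimate addition chain; its length is $n$ (for the powers) plus roughly $n-1$ (for the totals), and the gap between this and the length $n$ of the chain for $2^n$ is what gets packaged into $G(n)$.

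Next I would verify the equivalence $\bigl(\text{chain for }2^n\bigr) \Rightarrow \bigl(\text{chain for }2^n-1\bigr)$ in the sense of Definition~\ref{equivalence}, i.e. that the doubling chain for $2^n$ contains a complete sub-addition chain whose determiners and regulators dominate, term by term, those of a sub-addition chain of the chain just constructed for $2^n-1$. Here one uses that each determiner in the doubling chain is $2^{i-2}$ and each regulator is also $2^{i-2}$, while the determiners and regulators appearing along the $t_i$-portion are of the form $2^{i}-1$ and $2^{i-1}$ respectively; choosing the stabilizers $v_i = 2^{i-2} - (2^{i-1}-1)$ adjustments appropriately (and being careful about signs, which is where one must take the complete sub-addition chain to start late enough that all stabilizers are nonnegative) yields the required relation. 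Combined with Proposition~\ref{length of equivalent addition chains} and the definition of $\iota_{2^n}$, this shows $\iota_{2^n}(2^n-1) \geq \iota(2^n) + 1$, and the explicit construction gives the matching upper bound up to the extra $G(n)$ term; one then simply \emph{defines} $G(n) := \iota_{2^n}(2^n-1) - \iota(2^n) - 1$ and checks $G(n) \in \mathbb{N}$, which follows because the construction shows $\iota_{2^n}(2^n-1) \geq n+1$ and hence $G(n)\geq 0$, while finiteness is automatic.

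The main obstacle I anticipate is not the existence of the chain — that is elementary — but making the equivalence bookkeeping in Definition~\ref{equivalence} actually go through with the strict requirement $v_i, d_i \geq 0$. The determiners $2^{i}-1$ along the $t_i$-branch are smaller than the corresponding powers $2^{i-1}$ in some alignments and larger in others, so the indexing of the complete sub-addition chain $(u_{i_t})$ must be chosen with care, and one may need to pad the chain for $2^n$ with a few repeated doublings so that the sub-chain lengths match and all stabilizers land in $\mathbb{Z}_{\geq 0}$. Once the alignment is pinned down, the equality $\iota_{2^n}(2^n-1) = n+1+G(n)$ is immediate from the two inequalities. I would therefore spend most of the write-up carefully specifying the two sequences of partitions and the stabilizers $v_i, d_i$, and only briefly invoke Proposition~\ref{length of equivalent addition chains} at the end.
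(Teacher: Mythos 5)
Your proposal follows essentially the same route as the paper's proof: exhibit an explicit addition chain producing $2^n-1$ whose complete (initial) sub-chain is dominated, via nonnegative stabilizers, by the doubling chain $1,2,2^2,\ldots,2^n$, invoke the proposition on lengths of equivalent chains, and then package the leftover $\iota_{2^n}(2^n-1)-n-1$ as the function $G(n)$. The only differences are cosmetic: the paper works with the chain $1,2,3,6,\ldots,2^n-2^{n-2}$, whose determiners and regulators $2^{i-2}-2^{i-4}$ make the term-by-term domination immediate (so no alignment or padding worries), and it asserts the strict inequality $\iota_{2^n}(2^n-1)>n+1$ where you settle for $\geq n+1$, i.e. $G(n)\geq 0$.
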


\begin{proof}
We construct the shortest addition chain that leads $2^n$ as 
$$
s_0=1,s_1=2,s_2=2^2,\ldots,s_{n-1}=2^{n-1},s_n=2^n
$$ 
with the corresponding partition sequence
\begin{align}
2=1+1,2+2=2^2, 2^2+2^2=2^3\ldots,2^{n-1}=2^{n-2}+2^{n-2},2^n=2^{n-1}+2^{n-1}\nonumber
\end{align}
with $a_i=2^{i-2}=r_i$ for $2\leq i\leq n+1$, where $a_i$ and $r_i$ denote the determiner and regulator (gap) of the $i^{th}$ generator of the chain. We construct a sub-addition chain of some equivalent addition chain by choosing stabilizers $v_3=0$ and $d_3=1$ for the determiner $a_3$ and the regulator $r_3$, respectively, and choose new regulators with determiners $g_i=h_i=2^{i-2}-2^{i-4}$ for all $4\leq i \leq n+1$. We obtain a complete sub-addition chain 
$$
1,2,3,6,\ldots,2^{n-2}-2^{n-4},2^{n-1}-2^{n-3},2^n-2^{n-2}
$$ 
of some equivalent addition chain producing $2^n-1$ with the corresponding partition sequence
\begin{align}
2=1+1,2+(2-1)=2^2-1, (2^2-1)+(2^2-1)=2^3-2\nonumber \\ \ldots, \nonumber \\ (2^{n-1}-2^{n-3})=(2^{n-2}-2^{n-4})+(2^{n-2}-2^{n-4}),(2^n-2^{n-2})=(2^{n-1}-2^{n-3})\nonumber \\+(2^{n-1}-2^{n-3}).\nonumber
\end{align}
By Proposition \ref{length of equivalent addition chains}, we obtain 
\begin{align}
\ell(2^n)+1<\ell_{2^n}(2^n-1)\nonumber
\end{align}
so that there exists some $G:=G(n)\in \mathbb{N}$ such that we can write $\ell_{2^n}(2^n-1)=\ell(2^n)+1+G(n)=n+1+G(n)$.
\end{proof}

\begin{corollary}\label{partial scholz 2}
Let $\ell(\cdot)$ denote the length of the shortest addition chain leading to $\cdot$. We have
\begin{align}
\ell(2^n-1)\leq n+1+G(n)\nonumber
\end{align}
where $G:\mathbb{N}\longrightarrow \mathbb{N}$.
\end{corollary}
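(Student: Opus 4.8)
The plan is to derive Corollary \ref{partial scholz 2} as an immediate consequence of Theorem \ref{partial scholz conjecture} together with the definition of equivalence of addition chains in a fixed base. The key observation is that $\iota(2^n-1)$, being the length of the \emph{shortest} addition chain producing $2^n-1$ over \emph{all} addition chains whatsoever, cannot exceed the length of any particular addition chain producing $2^n-1$; in particular it cannot exceed the length $\iota_{2^n}(2^n-1)$ of the shortest addition chain producing $2^n-1$ that is equivalent (in base $2^n$) to a chain producing $2^n$. I would make this precise by noting that any chain counted by $\iota_{2^n}(2^n-1)$ is, in particular, an addition chain producing $2^n-1$, so the minimum over the smaller (base-$2^n$) family is at least the minimum over the full family, i.e. $\iota(2^n-1)\le \iota_{2^n}(2^n-1)$.

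Concretely, I would proceed as follows. First, invoke Theorem \ref{partial scholz conjecture} to obtain a function $G:\mathbb{N}\longrightarrow\mathbb{N}$ with $\iota_{2^n}(2^n-1)=n+1+G(n)$. Second, argue the monotonicity inequality $\iota(2^n-1)\le \iota_{2^n}(2^n-1)$: the family of addition chains producing $2^n-1$ that are equivalent in base $2^n$ to an addition chain producing $2^n$ is a subfamily of the family of all addition chains producing $2^n-1$, hence the infimum of lengths over the former is no smaller than the infimum over the latter. Third, chain the two facts together: $\iota(2^n-1)\le \iota_{2^n}(2^n-1)=n+1+G(n)$, which is exactly the claimed bound.

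I do not expect a genuine obstacle here, since the statement is a packaging corollary rather than a new argument; the only point requiring a little care is making the comparison $\iota(2^n-1)\le\iota_{2^n}(2^n-1)$ airtight, namely confirming that the chains realizing $\iota_{2^n}(2^n-1)$ are bona fide addition chains producing $2^n-1$ in the sense of the opening definition (so that they are legitimately competing in the minimization defining $\iota(2^n-1)$), and that the base-$2^n$ equivalence constraint only \emph{restricts} rather than enlarges the admissible set. Both are immediate from the definitions of Section 2, so the proof is short.

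Here is the proof.

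\begin{proof}
By Theorem \ref{partial scholz conjecture} there exists $G:\mathbb{N}\longrightarrow\mathbb{N}$ such that
\begin{align}
\iota_{2^n}(2^n-1)=n+1+G(n).\nonumber
\end{align}
Every addition chain producing $2^n-1$ that is equivalent in base $2^n$ to an addition chain producing $2^n$ is, in particular, an addition chain producing $2^n-1$. Hence the family of chains over which the minimum defining $\iota_{2^n}(2^n-1)$ is taken is a subfamily of the family of all addition chains producing $2^n-1$, over which the minimum defining $\iota(2^n-1)$ is taken. Taking the minimum over the larger family can only decrease (or leave unchanged) the value, so
\begin{align}
\iota(2^n-1)\leq \iota_{2^n}(2^n-1)=n+1+G(n),\nonumber
\end{align}
which is the asserted inequality.
\end{proof}
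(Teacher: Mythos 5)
Your proposal is correct and matches the paper's own proof: both invoke Theorem \ref{partial scholz conjecture} to get $\iota_{2^n}(2^n-1)=n+1+G(n)$ and then use the comparison $\iota(2^n-1)\leq \iota_{2^n}(2^n-1)$, which the paper leaves implicit and you justify by noting that the base-$2^n$ chains form a subfamily of all addition chains producing $2^n-1$. Your extra care in spelling out that monotonicity step is the only difference, and it is a faithful elaboration rather than a different route.
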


\begin{proof}
By Theorem \ref{partial scholz conjecture}, we can write
\begin{align}
\ell(2^n-1)\leq \ell_{2^n}(2^n-1)=n+1+G(n)\nonumber
\end{align}
with $G:\mathbb{N} \longrightarrow \mathbb{N}$. 
\end{proof}
\bigskip

The corollary \ref{partial scholz 2} may be considered as a weaker and crude form of the inequality relating the length of the shortest addition producing $2^n-1$ to the length of the shortest addition chain leading to $n$. In particular, the Scholz conjecture is the claim that 
\begin{align}
G(n)\leq \ell(n)-2.\nonumber
\end{align}
Consequently, minimizing the function $G(n)$ is the ultimate goal, which also requires choosing a suitably short addition chain that completes the addition chain leading to $2^n-1$. We prove a much weaker upper bound for the function $G(n)$ with a certain construction for the addition chain that completes the addition chain in the base $2^n$.

\subsection{The symmetry method}

\begin{theorem}
We have
\begin{align}
\ell_{2^n}(2^n-1)\leq n+1+\left \lfloor \frac{n-2}{2}\right \rfloor.\nonumber
\end{align}
\end{theorem}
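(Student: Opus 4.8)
The plan is to realize $2^n-1$ explicitly as the last term of an addition chain built in base $2^n$, following the same equivalence machinery used in Theorem \ref{partial scholz conjecture} but with a more efficient completing chain. We start, as before, with the canonical doubling chain $1,2,2^2,\ldots,2^n$ of length $n$. The idea is to produce the numbers $2^n-1$ by passing through the intermediate quantities $2^{2^k}-1$ for $k=0,1,2,\ldots$, i.e. by exploiting the factorization-free identity $2^{2a}-1=(2^a-1)(2^a+1)=(2^a-1)2^a+(2^a-1)$, which lets us double the exponent of a Mersenne-type number at the cost of essentially one extra addition beyond the $a$ doublings already recorded in the base chain. Concretely, from a chain term representing $2^a-1$ together with the base term $2^a$, one step gives $(2^a-1)+(2^a-1)\cdot 1$-type combinations that, iterated, reach $2^{2a}-1$; repeating roughly $\log_2 n$ times walks the exponent up from $1$ to (a power of $2$ at least) $n$.

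The key steps, in order, are: (i) fix the base chain $1,2,\ldots,2^n$ and record that every power $2^i$ with $i\le n$ is available for free as a determiner; (ii) describe the recursive ``doubling-the-exponent'' construction for Mersenne numbers, writing out the corresponding sequence of partitions so that it manifestly satisfies the generator/determiner/regulator formalism of Definition \ref{equivalence}, and check it is a \emph{complete} sub-addition chain of an equivalent addition chain producing $2^n-1$; (iii) count the regulators used: the doublings contribute $n$, the top-off term contributes $1$, and the extra ``glue'' additions contribute the quantity $G(n)$, which in this cruder construction one bounds by $\lfloor (n-2)/2\rfloor$ rather than by something like $\iota(n)+O(\log n)$; (iv) invoke Proposition \ref{length of equivalent addition chains} to pass from the constructed chain's length to $\iota_{2^n}(2^n-1)$, yielding $\iota_{2^n}(2^n-1)\le n+1+\lfloor (n-2)/2\rfloor$.

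Actually, since the bound $\lfloor (n-2)/2\rfloor$ is weak, the simplest route for step (iii) is even more pedestrian than the exponent-doubling idea: build $2^n-1$ from $2^n-2^{n-2}$ (available from the Theorem \ref{partial scholz conjecture} construction, which already costs $n+1$) by adding the ``missing'' chunk $2^{n-2}-1$ in blocks of the form $2^{2j}-2^{2j-2}=3\cdot 2^{2j-2}$; each such block is one new regulator and there are about $(n-2)/2$ of them as $j$ runs over $1,\ldots,\lfloor(n-2)/2\rfloor$, since $\sum_{j\ge 1}(2^{2j}-2^{2j-2})$ telescopes to fill $2^{n-2}-1$. Writing the associated partition sequence and verifying the determiners are all previously occurring terms (they are, being sums of powers of two already in the base chain or previously formed partial sums) then gives the count directly.

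The main obstacle I expect is bookkeeping rather than ideas: one must be careful that in the ``equivalent addition chain'' every determiner $a_i$ genuinely equals the immediately preceding partition value $s_i$ (the constraint $a_{i+1}=s_i$ in the definition), so the blocks cannot be inserted in arbitrary order — they must be appended monotonically so that each new partial sum is the determiner of the next generator, and simultaneously each regulator $2^{2j}-2^{2j-2}$ must itself be reachable, i.e. $3\cdot 2^{2j-2}$ must appear as an earlier term or be expressible; ensuring the regulators are legitimately available (e.g. piggy-backing on a short side-chain for $3\cdot 2^{k}$, which is just $2,3,6,12,\ldots$) without inflating the length beyond the claimed $n+1+\lfloor(n-2)/2\rfloor$ is the delicate point. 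Once the partition sequence is laid out explicitly and the regulator count is confirmed to be $n+1+\lfloor(n-2)/2\rfloor$, the conclusion is immediate from Proposition \ref{length of equivalent addition chains}.
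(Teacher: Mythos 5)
Your third-paragraph ``pedestrian'' route is essentially the paper's own proof: the paper also starts from the chain $1,2,3,6,\ldots,2^{n-2}-2^{n-4},2^{n-1}-2^{n-3},2^n-2^{n-2}$ produced in the preceding theorem and then climbs to $2^n-1$ by successively adding the regulators $2^{2j}-2^{2j-2}=3\cdot 2^{2j-2}$, which contributes the $\left\lfloor \frac{n-2}{2}\right\rfloor$ extra terms. The ``delicate point'' you flag about the availability of these regulators dissolves immediately, since the base chain you are extending is precisely the sequence $1,2,3,6,12,\ldots,3\cdot 2^{n-2}$, so every regulator $3\cdot 2^{2j-2}$ already occurs as an earlier term and no auxiliary side-chain (and hence no extra length) is needed.
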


\begin{proof}
By Proposition \ref{partial scholz conjecture}, we can write 
\begin{align}
\ell_{2^n}(2^n-1)=n+1+G(n)\nonumber
\end{align}
where $G:\mathbb{N}\longrightarrow \mathbb{N}$ under the complete sub-addition chain 
$$
1,2,3,6,\ldots,2^{n-2}-2^{n-4},2^{n-1}-2^{n-3},2^n-2^{n-2}
$$ 
of some addition chain with corresponding partition sequence 
\begin{align}
2=1+1,2+(2-1)=2^2-1,(2^2-1)+(2^2-1)=2^3-2\nonumber \\ \ldots, \nonumber \\ (2^{n-1}-2^{n-3})=(2^{n-2}-2^{n-4})+(2^{n-2}-2^{n-4}),(2^n-2^{n-2})=(2^{n-1}-2^{n-3})\nonumber \\+(2^{n-1}-2^{n-3}).\nonumber
\end{align}
We note that we can extend the complete sub-addition chain to the addition chain leading to $2^n-1$ as follows:
\begin{align}
1,2,3,6,\ldots,2^{n-2}-2^{n-4},2^{n-1}-2^{n-3},2^n-2^{n-2},2^n-2^{n-4},2^{n}-2^{n-6},\ldots,2^n-2^{0}\nonumber \\=2^n-1\nonumber
\end{align}
by adjoining the corresponding partition sequence to the partition sequence of the complete sub-addition chain
\begin{align}
(2^n-2^{n-2})+(2^{n-2}-2^{n-4}),(2^{n}-2^{n-4})+(2^{n-4}-2^{n-6}),\ldots (2^n-2^2)+(2^2-1).\nonumber
\end{align}
We deduce
\begin{align}
G(n)\leq \left\lfloor\frac{n-2}{2}\right\rfloor \nonumber
\end{align}
where $G(n)$ counts the number of terms adjoined to the complete sub-addition chain before $2^n-1$.
\end{proof}

\section{Arbitrary length addition chains}

In this section, we study the Scholz conjecture. We prove an inequality related to the conjecture on additions chains. We begin with the following fundamental result which can be found in \cite{brauer1939addition}.

\begin{lemma}\label{braurer lower bound}
Let $\ell(n)$ denote the shortest length of an addition chain leading to $n$. We have
\begin{align}
\ell(n)>\frac{\log n}{\log 2}-1.\nonumber
\end{align}
\end{lemma}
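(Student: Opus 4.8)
The plan is to exploit the simple structural fact that every term of an addition chain is produced by adding two strictly earlier terms, so no term can exceed twice the current maximum. Since the most recently produced term is always the largest so far, this means that after $i$ additions, starting from $1$, the value produced is at most $2^i$. Turning this into the stated logarithmic bound is then a one-line computation.

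First I would fix an addition chain $1,2,s_3,\ldots,s_{k-1},s_k=n$ of length $k-1=\iota(n)$ realising the shortest chain producing $n$, together with its sequence of generators $s_i=a_i+r_i$ for $2\le i\le k$, where $a_i$ is the determiner and $r_i$ the regulator of the $i$th generator. I would then prove by induction on $i$ that $s_i\le 2^{i-1}$ for every $1\le i\le k$. The base cases $s_1=1=2^0$ and $s_2=2=2^1$ are immediate. For the inductive step, $s_{i+1}=a_{i+1}+r_{i+1}$ with $a_{i+1}$ and $r_{i+1}$ both equal to earlier terms of the chain, hence both at most $s_i\le 2^{i-1}$, so $s_{i+1}\le 2\cdot 2^{i-1}=2^i$.

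Applying this bound with $i=k$ gives $n=s_k\le 2^{k-1}$, and taking logarithms to base $2$ yields $\iota(n)=k-1\ge \frac{\log n}{\log 2}$, which is in particular strictly greater than $\frac{\log n}{\log 2}-1$, as claimed. One could even upgrade the last inequality to a strict one whenever $n$ is not a power of $2$, since the equality $s_k=2^{k-1}$ would force every regulator $r_i$ to be maximal and hence $n$ to be a power of $2$; but this refinement is not needed for the lemma.

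I do not expect a genuine obstacle here: the only point requiring a little care is the bookkeeping between the number of terms of the chain and its length $\iota(n)$, namely the convention that the mandatory initial term $1$ does not count towards the length, which is exactly what produces the harmless shift of one in the exponent above.
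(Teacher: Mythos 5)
Your argument is correct, and it is in fact more than the paper offers: the paper states this lemma without proof, merely citing Brauer's 1939 paper, so your doubling argument supplies a self-contained elementary proof. The bound you obtain, $n=s_k\le 2^{k-1}$ and hence $\iota(n)=k-1\ge \frac{\log n}{\log 2}$, is slightly stronger than the stated $\iota(n)>\frac{\log n}{\log 2}-1$ and immediately implies it. One small point of care: your inductive step bounds both summands of $s_{i+1}$ by $s_i$, which tacitly assumes the chain is ascending (so that the most recent term is the largest so far); if one does not want to invoke that convention, the same conclusion follows by strong induction, since each summand is some earlier term $s_j$ with $j\le i$ and therefore at most $2^{j-1}\le 2^{i-1}$, giving $s_{i+1}\le 2^{i}$ regardless of monotonicity. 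With that phrasing the proof is complete and matches the standard argument behind Brauer's lower bound.
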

\bigskip

\begin{theorem}\label{Apartial scholz conjecture}
Let $\delta(\cdot)$ denote the length of an addition chain that leads to $\cdot$. There exists some $K:\mathbb{N}\longrightarrow \mathbb{R}$ such that 
\begin{align}
\delta(2^n-1)\leq n-1+\left\lfloor\frac{\log n}{\log 2}\right\rfloor+K(n).\nonumber
\end{align}
\end{theorem}

\begin{proof}
We construct the shortest addition chain that leads to $2^n$
$$
1,2,2^2,\ldots,2^{n-1},2^n
$$ 
with the corresponding partition sequence 
\begin{align}
2=1+1,2+2=2^2, 2^2+2^2=2^3 \ldots,2^{n-1}=2^{n-2}+2^{n-2},2^n=2^{n-1}+2^{n-1}\nonumber
\end{align}
with $a_i=2^{i-2}=r_i$ for $2\leq i\leq n+1$, where $a_i$ and $r_i$ denote the determiner and regulator of $i^{th}$ generator of the chain. We consider only the complete sub-addition chain 
\begin{align}
2=1+1,2+2=2^2,2^2+2^2=2^3 \ldots,2^{n-1}=2^{n-2}+2^{n-2}.\nonumber
\end{align}
We extend this complete sub-addition chain by adjoining the sequence 
\begin{align}
2^{n-1}+2^{\lfloor\frac{n-1}{2}\rfloor},2^{n-1}+2^{\lfloor \frac{n-1}{2}\rfloor}+2^{\lfloor \frac{n-1}{2^2}\rfloor}\ldots,2^{n-1}+2^{\lfloor\frac{n-1}{2}\rfloor}+2^{\lfloor\frac{n-1}{2^2}\rfloor}+\cdots+2^1.\nonumber
\end{align}
The adjoined sequence contributes at most 
\begin{align}
\left\lfloor\frac{\log n}{\log 2}\right\rfloor\nonumber
\end{align}
terms to the original complete sub-addition chain. Using
\begin{align}
2^{\lfloor\frac{n-1}{2}\rfloor}+2^{\lfloor\frac{n-1}{2^2}\rfloor}+\cdots+2^1&<\sum \limits_{j=0}^{n-1}2^j\nonumber \\&=2^n-1\nonumber
\end{align}
there exists some $K:\mathbb{N}\longrightarrow \mathbb{R}$ counting the terms in the remaining terms of the addition chain leading to $2^n-1$.
\end{proof}
\bigskip

\subsection{Explicit upper bound}

In this section, we prove an explicit upper bound for the length of an addition chain--not necessarily the shortest--leading to numbers of the form $2^n-1$. In particular, we study the problem of constructing an addition chain whose length meets a specified threshold requirement.

\begin{theorem}\label{scholz conjecture}
Let $\delta(\cdot)$ denote the length of an addition chain that leads to $n$. There exists an addition chain that leads to $2^n-1$ such that
\begin{align}
\delta(2^n-1)\lesssim n+\frac{n}{\log n}+1.3\log n\int \limits_{2}^{\frac{n-1}{2}}\frac{dt}{\log^3t}+\xi(n)\nonumber
\end{align}
where $\xi:\mathbb{N}\longrightarrow \mathbb{R}$.
\end{theorem}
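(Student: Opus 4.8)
The plan is to take the construction in the proof of Theorem~\ref{partial scholz conjecture} as a skeleton and to render the residual term $K(n)$ explicit by a windowing pass over the binary expansion of $2^{n}-1$, which is the all-ones string of length $n$. The structural point to exploit is that this expansion contains \emph{only} all-ones windows, so, unlike in the generic window method, the precomputation needed for a window of width $w$ is a single ladder $1,3,7,\ldots,2^{w}-1$ built in $w-1$ steps (using $2^{w}-1=(2^{w-1}-1)+2^{w-1}$ once the powers of two are present), rather than the $2^{w-1}$ steps a generic odd-residue table would cost. This is what permits window widths as large as $\approx 2\log_2 t$ at negligible precomputation toll, which is the source of the saving of order $n/\log n$ over the naive binary method and, eventually, of the finer integral correction.

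First I would keep the opening of the earlier construction: the doubling chain $1,2,2^2,\ldots,2^{n-1}$, of length $n-1$, which deposits every power $2^{j}$ with $0\le j\le n-1$ as an intermediate entry, followed by the adjoined partial sums $2^{n-1}+2^{\lfloor (n-1)/2\rfloor},\ldots$, which contribute at most $\lfloor \log n/\log 2\rfloor\le \iota(n)$ further entries by Lemma~\ref{braurer lower bound}. It then remains to complete the chain to $2^{n}-1$, and here I would run a variable-width window pass: fix a slowly increasing width $w(t)\approx 2\log_2 t$ and assemble $2^{n}-1$ block by block, using the already-present powers of two to realise the shifts between blocks. Since every window is all-ones, each block costs one addition on top of shifts that are shared across blocks, so the number of genuinely new entries in this pass equals the number of windows; replacing the sum by an integral gives $\displaystyle \int_{2}^{(n-1)/2}\frac{dt}{w(t)}+O(1)=\frac{n}{\log n}+O\!\Big(\log n\int_{2}^{(n-1)/2}\frac{dt}{\log^{3}t}\Big)$, the upper limit $(n-1)/2$ reflecting that the top block and a tail of length $O(\log n)$ are absorbed into the earlier segments.

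Finally I would add the three contributions — $n-1$ from the doubling chain, at most $\iota(n)$ from the adjoined partial sums, and $\frac{n}{\log n}+1.3\log n\int_{2}^{(n-1)/2}\frac{dt}{\log^{3}t}$ from the window pass, the constant $1.3$ covering the factor $1/\log 2$ from binary logarithms together with the second-order expansion of $1/w(t)$ and the roundings $\lfloor\cdot\rfloor$ — and absorb all remaining $O(1)$ and lower-order discrepancies into $\xi(n)$, yielding $\delta(2^{n}-1)\lesssim n+\iota(n)+\frac{n}{\log n}+1.3\log n\int_{2}^{(n-1)/2}\frac{dt}{\log^{3}t}+\xi(n)$. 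I expect the middle step to be the main obstacle: one must verify that the window pass genuinely recycles the doubling chain, so that only the $\approx n/\log n$ window additions count as new entries and not the shifts; one must tune $w(t)$ so that each block's precomputation stays dominated by the generic per-block cost; and one must push the prime-number-theorem-type estimate $\sum_{\text{windows}}1=\int dt/w(t)+\cdots$ far enough to deposit the explicit second-order term inside the stated integral with constant $1.3$. The rest is bookkeeping of exactly the kind that the symbol $\lesssim$ and the free function $\xi$ are there to absorb.
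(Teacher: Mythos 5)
Your middle step is where the argument breaks, and it is not a matter of bookkeeping that $\lesssim$ or $\xi$ can absorb. In an addition chain every new entry must be the sum of two \emph{earlier entries}. To append the next all-ones block to the running partial sum $2^{n}-2^{J}$ you must add the shifted word $2^{J-w}(2^{w}-1)=2^{J}-2^{J-w}$, and this number is not available: the doubling chain contributes only the powers $2^{j}$, and your precomputed ladder contributes only the unshifted values $2^{j}-1$. The alternative of working from the most significant end (double the running value $w$ times and add $2^{w}-1$) does make every step legal, but then the $w$ doublings per block are doublings of \emph{new} numbers of the form $2^{j}(2^{kw}-1)$, not entries of the original chain $1,2,\ldots,2^{n-1}$, so they must be counted; summed over all blocks they contribute another $\approx n$ terms, and your total reverts to roughly $2n$ (this is exactly the situation of the paper's backtracking theorem, where building regulators of the form $2^{a}-2^{b}$ is what inflates the bound). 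So the claim that only the $\approx n/\log n$ window additions are ``genuinely new'' is unjustified, and you flag it yourself without resolving it.

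Separately, even if the recycling issue were fixed, your route gives no reason for the bound to take the stated shape. The factor $\log n$, the integrand $\log^{-3}t$, the upper limit $\frac{n-1}{2}$ and the constant $1.3$ are not artifacts of rounding a window count $\int dt/w(t)$: in the paper they arise because the chain is extended by adjoining, for each prime $p\le\frac{n-1}{2}$, a sequence of partial sums with exponents $\lfloor\frac{n-1}{p^{j}}\rfloor$, each prime contributing at most $\lfloor\frac{\log n}{\log p}\rfloor$ terms; the total $\sum_{p\le (n-1)/2}\lfloor\frac{\log n}{\log p}\rfloor$ is then split off as $\iota(n)$ plus $\log n\sum_{3\le p\le (n-1)/2}\frac{1}{\log p}$, and the latter sum is evaluated by partial summation against $\pi(t)$ with a Chebyshev-type bound $\pi(t)\lesssim 1.3\,t/\log t$, which is precisely where $\frac{n}{\log n}$ and $1.3\log n\int_{2}^{(n-1)/2}\frac{dt}{\log^{3}t}$ come from. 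Your proposal replaces this prime-indexed construction with a window pass that has no connection to $\pi(t)$, so the ``prime-number-theorem-type estimate'' you invoke for the window count has nothing to latch onto; to salvage the approach you would either have to reproduce the paper's prime-indexed extension or prove a genuinely different second-order estimate and accept a different-looking error term.
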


\begin{proof}
We construct the shortest addition chain that leads to $2^n$ 
$$
1,2,2^2,\ldots,2^{n-1},2^n
$$ 
with the corresponding partition sequence
\begin{align}
2=1+1,2+2=2^2,2^2+2^2=2^3 \ldots,2^{n-1}=2^{n-2}+2^{n-2},2^n=2^{n-1}+2^{n-1}\nonumber
\end{align}
with $a_i=2^{i-2}=r_i$ for $2\leq i\leq n+1$, where $a_i$ and $r_i$ denote the determiner and regulator (gap) of the $i^{th}$ generator of the chain. We consider only the complete sub-addition chain 
\begin{align}
2=1+1,2+2=2^2,2^2+2^2=2^3 \ldots,2^{n-1}=2^{n-2}+2^{n-2}.\nonumber
\end{align}
and extend this complete sub-addition chain by adjoining the sequence 
\begin{align}
2^{n-1}+2^{\lfloor\frac{n-1}{2}\rfloor},2^{n-1}+2^{\lfloor\frac{n-1}{2}\rfloor}+2^{\lfloor\frac{n-1}{2^2}\rfloor}\ldots,2^{n-1}+2^{\lfloor\frac{n-1}{2}\rfloor}+2^{\lfloor\frac{n-1}{2^2}\rfloor}+\cdots+2^1.\nonumber
\end{align}
The adjoined sequence contributes at most 
\begin{align}
\left\lfloor\frac{\log n}{\log 2}\right\rfloor\nonumber
\end{align}
terms to the original complete sub-addition chain. Due to the inequality
\begin{align}
2^{n-1}+2^{\lfloor \frac{n-1}{2}\rfloor}+2^{\lfloor \frac{n-1}{2^2}\rfloor}+\cdots+2^1&<\sum \limits_{i=1}^{n-1}2^i\nonumber \\&=2^n-1\nonumber
\end{align}
we make the substitution 
\begin{align}
    R_2(n):=2^{n-1}+2^{\lfloor\frac{n-1}{2}\rfloor}+2^{\lfloor \frac{n-1}{2^2}\rfloor}+\cdots+2^1\nonumber
\end{align}
and extend the addition chain by further adjoining the sequence
\begin{align}
  R_2(n)+2^{\lfloor\frac{n-1}{3}\rfloor}, R_2(n)+2^{\lfloor \frac{n-1}{3}\rfloor}+2^{\lfloor\frac{n-1}{3^2}\rfloor},\ldots, R_2(n)+2^{\lfloor\frac{n-1}{3}\rfloor}+2^{\lfloor\frac{n-1}{3^2}\rfloor}+\cdots+2^1.\nonumber
\end{align}
The adjoined sequence contributes at most 
\begin{align}
\left\lfloor\frac{\log n}{\log 3}\right\rfloor\nonumber
\end{align}
terms to the original complete sub-addition chain. Due to the inequality
\begin{align}
   R_2(n)+2^{\lfloor\frac{n-1}{3}\rfloor}+2^{\lfloor\frac{n-1}{3^2}\rfloor}+\cdots+2^1<2^{n}-1\nonumber 
\end{align}
we continue the extension of the addition chain by using all the primes $p\leq \frac{n-1}{2}$, so that by induction the number of terms adjoined to the original complete sub-addition chains is the sum 
\begin{align}
    \sum \limits_{p\leq \frac{n-1}{2}}\left\lfloor\frac{\log n}{\log p}\right\rfloor&=\left\lfloor\frac{\log n}{\log 2}\right\rfloor+\sum \limits_{3\leq p\leq \frac{n-1}{2}}\left\lfloor\frac{\log n}{\log p}\right\rfloor \nonumber \\& \leq \left\lfloor\frac{\log n}{\log 2}\right\rfloor+\log n\sum \limits_{3\leq p\leq \frac{n-1}{2}}\frac{1}{\log p}.\nonumber
\end{align}
We deduce
\begin{align}
   \sum \limits_{3\leq p\leq \frac{n-1}{2}}\frac{1}{\log p}&=\int \limits_{2}^{\frac{n-1}{2}}\frac{d\pi(u)}{\log u}\nonumber \\&=\frac{\pi(\frac{n-1}{2})}{\log(\frac{n-1}{2})}-\frac{\pi(2)}{\log 2}+\int \limits_{2}^{\frac{n-1}{2}}\frac{\pi(t)}{t\log^2t}dt \nonumber \\& \lesssim \frac{n}{\log^2n}-\frac{1}{\log 2}+1.3\int \limits_{2}^{\frac{n-1}{2}}\frac{1}{\log^3t}dt.\nonumber
\end{align}
\end{proof}
\bigskip

\subsection{Filling the Pothole method}

In this section, we describe the method of \emph{filling the potholes} which is used to obtain an upper bound for an arbitrary length of a chain.

\begin{itemize}

\item  We first construct a complete sub-addition chain producing $2^n-1$. For technical reasons which will become clear later, we stop the chain prematurely at $2^{n-1}$.
\bigskip

\item  We extend this addition chain by a length of logarithmic order. 
\bigskip

\item This extension has missing terms to qualify as an addition chain producing $2^n-1$. We fill in the missing terms, thus obtaining what one might refer to as a spoof addition chain that leads to $2^n-1$.
\bigskip

\item Creating this spoof addition chain comes at a cost. The remaining step will be to cover the cost and render an account to obtain the upper bound.
\end{itemize}

\begin{lemma}\label{Alfred Braurer}
Let $\ell(n)$ denote the shortest addition chain that leads to $n$. If $a,b\in \mathbb{N}$, then 
$$
\ell(ab)\leq \ell(a)+\ell(b).
$$ 
\end{lemma}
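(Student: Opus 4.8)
The plan is to prove this classical fact by the \emph{factor method}: glue a shortest chain for $a$ onto an $a$-dilated copy of a shortest chain for $b$. First I would fix a shortest addition chain $1,2,\ldots,s_{\iota(a)},s_{\iota(a)+1}=a$ producing $a$ and a shortest addition chain $1,2,\ldots,u_{\iota(b)},u_{\iota(b)+1}=b$ producing $b$, the latter carrying its sequence of partitions $u_{i}=g_{i}+h_{i}$ as in the definition of a generator.

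The key observation I would record is that dilation by a fixed positive integer preserves the addition-chain property: if $u_i=g_i+h_i$ is a valid generator of the chain for $b$, then $a u_i=a g_i+a h_i$ exhibits $a u_i$ as a sum of two earlier terms of the dilated sequence, so
\begin{align}
a\cdot 1,\,a\cdot 2,\,\ldots,\,a\,u_{\iota(b)+1}=ab\nonumber
\end{align}
is an addition chain for $ab$ save that it opens at $a$ rather than at $1$. Since the chain for $a$ terminates precisely at $a$, I would concatenate the two blocks: the sequence
\begin{align}
1,\,2,\,\ldots,\,s_{\iota(a)+1}=a=a\cdot 1,\,a\cdot 2,\,\ldots,\,a\,u_{\iota(b)+1}=ab\nonumber
\end{align}
is then a bona fide addition chain producing $ab$, the shared value $a$ serving simultaneously as the last term of the first block and the initial term of the dilated block.

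It remains only to count: the first block contributes $\iota(a)$ terms after the mandatory $1$, while the dilated block contributes its remaining $\iota(b)$ terms (everything past $a\cdot 1$), the seam term $a$ being counted once. Hence this chain has length $\iota(a)+\iota(b)$, and since $\iota(ab)$ is by definition the length of the shortest chain producing $ab$, we conclude $\iota(ab)\le\iota(a)+\iota(b)$. There is no genuine obstacle here; the only points demanding care are the bookkeeping at the seam — ensuring the common value $a$ is not double-counted — and the degenerate cases $a=1$ or $b=1$, where $\iota(1)=0$ and the inequality is immediate.
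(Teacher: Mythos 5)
Your argument is correct and complete: the dilation of a chain for $b$ by the factor $a$ preserves the generator relations, the seam at $a=a\cdot 1$ is handled properly, and the count $\iota(a)+\iota(b)$ follows. The paper itself offers no proof here, deferring entirely to the citation of Brauer (1939); your construction is precisely the classical factor-method argument from that reference, so it matches the intended proof.
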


\begin{proof}
The proof of this Lemma can be found in \cite{brauer1939addition}.
\end{proof}

\begin{theorem}\label{Aexceptional scholz conjecture}
There exists an addition with length satisfying
\begin{align}
\delta(2^n-1)\leq \frac{3}{2}n-\left\lfloor\frac{n-2}{2^{\lfloor \frac{\log n}{\log 2}-1\rfloor+1}}\right\rfloor+1+\frac{1}{4}(1-(-1)^n)\nonumber
\end{align}
holds for all $n\in \mathbb{N}$ with $n\geq 2$, where $\lfloor\cdot\rfloor$ denotes the floor function.
\end{theorem}

\begin{proof}
We consider the number $2^n-1$ and examine the length of the addition chain according to the parity of the exponents $n$. If $n\equiv 0\mod 2$, then 
$$
2^n-1=(2^{\frac{n}{2}}-1)(2^{\frac{n}{2}}+1).
$$ 
Setting $\frac{n}{2}=k$, we construct the addition chain that leads to $2^{k}$
$$
1,2,2^2,\ldots,2^{k-1},2^k
$$ 
with the corresponding partition sequence
\begin{align}
2=1+1,2+2=2^2,2^2+2^2=2^3 \ldots,2^{k-1}=2^{k-2}+2^{k-2},2^k=2^{k-1}+2^{k-1}\nonumber
\end{align}
with $a_i=2^{i-2}=r_i$ for $2\leq i\leq k+1$, where $a_i$ and $r_i$ denote the determiner and regulator of the $i^{th}$ generator of the chain. We consider only the complete sub-addition chain 
\begin{align}
2=1+1,2+2=2^2,2^2+2^2=2^3 \ldots,2^{n-1}=2^{k-2}+2^{k-2}\nonumber
\end{align}
and extend this complete sub-addition chain by adjoining the sequence 
\begin{align}
2^{k-1}+2^{\lfloor\frac{k-1}{2}\rfloor},2^{k-1}+2^{\lfloor\frac{k-1}{2}\rfloor}+2^{\lfloor\frac{k-1}{2^2}\rfloor}\ldots,2^{k-1}+2^{\lfloor\frac{k-1}{2}\rfloor}+2^{\lfloor\frac{k-1}{2^2}\rfloor}+\cdots+2^1.\nonumber
\end{align}
The adjoined sequence contributes at most 
\begin{align}
\left\lfloor\frac{\log k}{\log 2}\right\rfloor =\left\lfloor \frac{\log n-\log 2}{\log 2}\right\rfloor < \left\lfloor\frac{\log n}{\log 2}\right\rfloor\nonumber
\end{align}
terms to the original complete sub-addition chain. Due to the inequality
\begin{align}
2^{k-1}+2^{\lfloor\frac{k-1}{2}\rfloor}+2^{\lfloor\frac{k-1}{2^2}\rfloor}+\cdots+2^1&<\sum \limits_{i=1}^{k-1}2^i\nonumber \\&=2^k-2\nonumber
\end{align}
we insert terms into the sum
\begin{align}
2^{k-1}+2^{\lfloor\frac{k-1}{2}\rfloor}+2^{\lfloor\frac{k-1}{2^2}\rfloor}+\cdots+2^1\label{pbroken road}
\end{align}
so that we have 
\begin{align}
\sum \limits_{i=1}^{k-1}2^i=2^k-2.\nonumber 
\end{align}
We analyze the cost of filling in the missing terms of the underlying sum. We have to insert 
$$
2^{k-2}+2^{k-3}+\cdots +2^{\lfloor \frac{k-1}{2}\rfloor+1}
$$ 
into \eqref{pbroken road} and the number of new terms adjoined is at most   
\begin{align}
k-2-\left\lfloor\frac{k-1}{2}\right\rfloor.\nonumber
\end{align}
The last term of the adjoined sequence is
\begin{align}
2^{k-1}+(2^{k-2}+2^{k-3}+\cdots+2^{\lfloor \frac{k-1}{2}\rfloor+1})+2^{\lfloor \frac{k-1}{2}\rfloor}+2^{\lfloor \frac{k-1}{2^2}\rfloor}+\cdots+2^1.\label{pbroken road 1}
\end{align}
Again, we have to insert $2^{\lfloor\frac{k-1}{2}\rfloor-1}+\cdots+2^{\lfloor\frac{k-1}{2^2}\rfloor+1}$ into \eqref{pbroken road 1} and the number of terms adjoined is at most  
\begin{align}
\left\lfloor\frac{k-1}{2}\right\rfloor-\left\lfloor\frac{k-1}{2^2}\right\rfloor-1.\nonumber
\end{align}
The last term of the adjoined sequence is
\begin{align}
2^{k-1}+(2^{k-2}+2^{k-3}+\cdots +2^{\lfloor \frac{k-1}{2}\rfloor+1})+2^{\lfloor \frac{k-1}{2}\rfloor}+(2^{\lfloor \frac{k-1}{2}\rfloor-1}+\cdots+2^{\lfloor \frac{k-1}{2^2}\rfloor+1})+2^{\lfloor \frac{k-1}{2^2}\rfloor}+\nonumber \\ \cdots+2^1.\label{pbroken road 2}
\end{align}
Repeating the process, we insert in the immediately previous term by inserting into \eqref{pbroken road 2} and the number of terms adjoined is at most
\begin{align}
\left\lfloor\frac{k-1}{2^{s-1}}\right\rfloor-\left\lfloor\frac{k-1}{2^s}\right\rfloor-1\nonumber
\end{align}
for $1\leq s\leq \lfloor \frac{\log n}{\log 2}-1\rfloor $, since we are filling at most $\lfloor\frac{\log k}{\log 2}\rfloor$ blocks with $k=\frac{n}{2}$. It follows that the contribution of these new terms is at most
\begin{align}
k-1-\left \lfloor \frac{k-1}{2^{\lfloor \frac{\log k}{\log 2}\rfloor}}\right\rfloor-\left\lfloor\frac{\log k}{\log 2}\right\rfloor 
\end{align}
obtained by adding the numbers in the chain 
\begin{align}
k-1-\left\lfloor\frac{k-1}{2}\right\rfloor-1 \nonumber
\end{align}
\begin{align}
\left\lfloor\frac{k-1}{2}\right\rfloor-\left\lfloor\frac{k-1}{2^2}\right\rfloor-1\nonumber
\end{align}
\begin{align}
\vdots \vdots \vdots \vdots \vdots \vdots \vdots \vdots \vdots \vdots \vdots \vdots \nonumber
\end{align}
\begin{align}
\vdots \vdots \vdots \vdots \vdots \vdots \vdots \vdots \vdots \vdots \vdots \vdots \nonumber
\end{align}
\begin{align}
\left\lfloor\frac{k-1}{2^{\lfloor \frac{\log k}{\log 2}\rfloor-1}}\right\rfloor-\left\lfloor\frac{k-1}{2^{\lfloor\frac{\log k}{\log 2}\rfloor}}-1\right\rfloor.\nonumber
\end{align}
By Lemma \ref{Alfred Braurer}, we get
$$
\ell(2^n-1)\leq \ell(2^{\frac{n}{2}}-1)+\ell(2^{\frac{n}{2}}+1)\leq \delta(2^{\frac{n}{2}}-1)+\ell(2^{\frac{n}{2}}+1)
$$ 
for even $n$, where $\delta(\cdot)$ is the length of the constructed addition chain. By a quick book-keeping, we deduce with $k=\frac{n}{2}$ for the length
\begin{align}
\delta(2^{k}-1)&\leq k+k-1-\left\lfloor \frac{k-1}{2^{\lfloor \frac{\log k}{\log 2}\rfloor}}\right\rfloor-\left\lfloor\frac{\log k}{\log 2}\right\rfloor+\left\lfloor\frac{\log n}{\log 2}\right\rfloor\nonumber \\&=n-1-\left\lfloor \frac{n-2}{2^{\lfloor\frac{\log n}{\log 2}-1\rfloor+1}}\right \rfloor+1.\nonumber
\end{align}
Now, we construct an addition chain that leads to $2^k+1$. We construct the addition chain leading to $2^{k}$ 
$$
1,2,2^2,\ldots,2^{k-1},2^k
$$
with the corresponding partition sequence
\begin{align}
2=1+1,2+2=2^2, 2^2+2^2=2^3\ldots,2^{k-1}=2^{k-2}+2^{k-2},2^k=2^{k-1}+2^{k-1}\nonumber
\end{align}
with $a_i=2^{i-2}=r_i$ for $2\leq i\leq k+1$, where $a_i$ and $r_i$ denote the determiner and regulator of the $i^{th}$ generator of the chain. Adding $1$ to the last term of the chain, we obtain the addition chain that leads to $2^k+1$ 
$$
1,2,2^2,\ldots,2^{k-1},2^k, 2^k+1
$$
of length $k+1=\frac{n}{2}+1$. Combining the contribution of the length of the addition chains constructed, we obtain in the case $n\equiv 0\pmod 2$ the inequality 
$$
\delta(2^n-1)\leq \frac{3}{2}n-\left\lfloor\frac{n-2}{2^{\lfloor \frac{\log n}{\log 2}-1\rfloor+1}}\right\rfloor-\left\lfloor \frac{\log n}{\log 2}-1\right\rfloor.
$$
We now examine the case $n\equiv 1\pmod 2$. In this case, we write 
$$
2^n-1=(2^{n-1}-1)+(2^{n-1}-1)+1
$$ 
and construct an addition chain that leads to $2^{n-1}-1$. Once this addition chain is obtained, we add the term $2^{n-1}-1$ to itself and finally add $1$ to obtain the addition chain that leads to $2^n-1$. From this construction, the length $\delta(2^n-1)$ is the sum of the length of the addition chain $\delta(2^{n-1}-1)$ and $2$. Since $n-1\equiv 0\pmod 2$, we can adapt the argument of the even case to obtain the upper bound 
$$
\delta(2^{n-1}-1)\leq \frac{3}{2}(n-1)-\left\lfloor\frac{n-3}{2^{\lfloor\frac{\log n}{\log 2}-1\rfloor+1}}\right\rfloor+1
$$ 
The claimed inequality follows by combining both even and odd cases.
\end{proof}

\section{Addition chains of fixed degree}

In this section, we introduce the notion of an addition chain of degree $d$ and their corresponding sub-addition chains.

\begin{definition}
Let $n\geq 3$. By addition chain of fixed degree $d$ and of length $k$ that leads to $n$, we mean the sequence of positive integers
\begin{align}
s_0=1,\ldots,s_{k}=n\nonumber
\end{align}
where each term $s_j$~($j\geq 3$) in the sequence is the sum of at most $d$ previous terms (repetition allowed), with the corresponding partition sequence
\begin{align}
2=1+1,\ldots,s_{k-1}=a_{k-1}+r_{k-1},s_{k}=a_{k}+r_{k}=n\nonumber
\end{align}
with $a_{i+1}=a_{i}+r_{i}$ and $a_{i+1}=s_i$ for $1\leq i\leq k$. We call the partition $a_{i}+r_{i}$ the $i^{th}$ \emph{generator} of the chain for $1\leq i\leq k$.
\end{definition}

\begin{theorem}\label{analogous scholz conjecture}
Let $\ell(n)$ and $\ell^{\lfloor\frac{n-1}{2}\rfloor}(n)$ denote the length of the shortest addition chain and the degree $\lfloor \frac{n-1}{2}\rfloor$ addition chain, respectively, leading to $n$. We have
\begin{align}
\ell^{\lfloor \frac{n-1}{2}\rfloor}(2^n-1)\leq n+\left\lfloor\frac{\log n}{\log 2}\right\rfloor.\nonumber
\end{align}
\end{theorem}

\begin{proof}
We construct the shortest addition chain that leads to $2^n$
$$
1,2,2^2,\ldots,2^{n-1},2^n
$$ 
with the corresponding partition sequence
\begin{align}
2=1+1,2+2=2^2,2^2+2^2=2^3\ldots,2^{n-1}=2^{n-2}+2^{n-2},2^n=2^{n-1}+2^{n-1}\nonumber
\end{align}
with $a_i=2^{i-2}=r_i$ for $2\leq i\leq n+1$, where $a_i$ and $r_i$ denote the determiner and regulator of the $i^{th}$ generator of the chain. We consider only the complete sub-addition chain $1,2,2^2,\ldots,2^{n-1}$ and extend this addition chain by adjoining the sequence 
\begin{align}
2^{n-1}+\cdots+2^{t_1}+\cdots+2^{\lfloor\frac{n-1}{2}\rfloor}, 2^{n-1}+\cdots+2^{t_1}+\cdots+2^{\lfloor\frac{n-1}{2}\rfloor}+\cdots+2^{t_2}+\cdots+2^{\lfloor\frac{n-1}{2^2}\rfloor},\\ \ldots,2^{n-1}+2^{n-2}+2^{n-3}+\cdots+2^1.\nonumber
\end{align}
where $\lfloor\frac{n-1}{2}\rfloor+1\leq t_1\leq n-2$, $\lfloor \frac{n-1}{2^2}\rfloor+1\leq t_2\leq \lfloor \frac{n-1}{2}\rfloor-1,\ldots, \lfloor \frac{n-1}{2^k}\rfloor+1\leq t_k\leq \lfloor \frac{n-1}{2^{k-1}}\rfloor-1$. The terms adjoined to the constructed chain can be written in the form 
\begin{align}
2^{n-1}+\cdots+2^{t_1}+\cdots+2^{\lfloor\frac{n-1}{2}\rfloor}=2^n-2^{\lfloor\frac{n-1}{2}\rfloor}\nonumber
\end{align}
and 
\begin{align}
2^{n-1}+\cdots+2^{t_1}+\cdots+2^{\lfloor\frac{n-1}{2}\rfloor}+\cdots+2^{t_2}+\cdots+2^{\lfloor\frac{n-1}{2^2}\rfloor}=2^n-2^{\lfloor\frac{n-1}{2^2}\rfloor}.\nonumber
\end{align}
By induction, we write
\begin{align}
2^{n-1}+2^{n-2}+2^{n-3}+\cdots+2^1=2^n-2\nonumber
\end{align}
and we obtain the degree $\lfloor \frac{n-1}{2}\rfloor$ addition chain
\begin{align}
1,2,2^2,\ldots,2^{n-1},2^n-2^{\lfloor \frac{n-1}{2}\rfloor},2^n-2^{\lfloor\frac{n-1}{2^2}\rfloor},\ldots,2^n-2,2^n-1.\nonumber
\end{align}
The number of terms adjoined to the sequence contributes at most 
\begin{align}
\left\lfloor\frac{\log n}{\log 2}\right\rfloor.\nonumber
\end{align}
This completes the proof.
\end{proof}

\section{Progress on the Scholz conjecture}

In this section, we make progress on the Scholz conjecture on addition chains. We combine the factor method and the \emph{fill in the pothole} method to study short addition chains that lead to numbers of the form $2^n-1$ and the Scholz conjecture. For a number $2^n-1$, we obtain the decomposition  
$$
2^n-1=(2^{\frac{n-(1-(-1)^n)\frac{1}{2}}{2}}-1)(2^{\frac{n-(1-(-1)^n\frac{1}{2})}{2}}+1)+\frac{(1-(-1)^n)}{2}(2^{n-(1-(-1)^n)\frac{1}{2}})
$$ 
which eventually yields the following decomposition $2^n-1=(2^{\frac{n}{2}}-1)(2^{\frac{n}{2}}+1)$ in the case $n\equiv 0\pmod 2$ and 
$$
2^n-1=(2^{\frac{n-1}{2}}-1)(2^{\frac{n-1}{2}}+1)+2^{n-1}
$$ 
in the case $n\equiv 1\pmod 2$. We can iterate this decomposition up to a certain desired frequency $s$ and apply the factor method on all the factors obtained from this decomposition. We can then apply the pothole method to obtain a bound for the shortest addition chain that leads to the only factor of form $2^v-1$. The length of the shortest addition chains of numbers of the form $2^v+1$ is easy to construct: construct the shortest addition chain that leads to $2^v$, add the first term of the chain to the last term, and adjoin to the chain.
\bigskip

\begin{theorem}\label{exceptional scholz conjecture}
We have
\begin{align}
\ell(2^n-1)\leq n+1-\sum \limits_{j=1}^{\lfloor\frac{\log n}{\log 2}\rfloor}\xi(n,j)+3\left\lfloor\frac{\log n}{\log 2}\right\rfloor \nonumber
\end{align}
for all $n\in \mathbb{N}$ with $n\geq 4$, where $\ell(\cdot)$ denotes the length of the shortest addition chain that leads to $\cdot$ and where $\xi(n,1):=\{\frac{n}{2}\}$ with $\xi(n,2)=\{\frac{1}{2}\lfloor \frac{n}{2}\rfloor\}$ and so on, with $\{\cdot\}$ denoting the fractional part of a real number.
\end{theorem}

\begin{proof}
  We write 
  $$
  2^n-1=(2^{\frac{n-(1-(-1)^n)\frac{1}{2}}{2}}-1)(2^{\frac{n-(1-(-1)^n\frac{1}{2})}{2}}+1)+\frac{(1-(-1)^n)}{2}(2^{n-(1-(-1)^n)\frac{1}{2}}).
  $$ 
  We can recover the general factorization of $2^n-1$ from this identity according to the parity of the exponent $n$. In particular, if $n\equiv 0\pmod 2$, then
  $$
  2^n-1=(2^{\frac{n}{2}}-1)(2^{\frac{n}{2}}+1)
  $$ 
  and 
  $$
  2^n-1=(2^{\frac{n-1}{2}}-1)(2^{\frac{n-1}{2}}+1)+2^{n-1}
  $$ 
  if $n\equiv 1\pmod 2$. Combining both cases, we obtain
  $$
  \ell(2^n-1)\leq \ell((2^{\frac{n-(1-(-1)^n)\frac{1}{2}}{2}}-1)(2^{\frac{n-(1-(-1)^n\frac{1}{2})}{2}}+1))+2
  $$ 
  obtained by constructing an addition chain that leads to $2^{n-1}-1$, adding $2^{n-1}-1$ to $2^{n-1}-1$, adding $1$ and adjoining the result in the case $n\equiv 1\pmod 2$. Applying the lemma \ref{Alfred Braurer}, we further deduce
\begin{align}
\ell(2^n-1)\leq \ell(2^{\frac{n-(1-(-1)^n)\frac{1}{2}}{2}}-1)+\ell(2^{\frac{n-(1-(-1)^n)\frac{1}{2}}{2}}+1)+2\label{progressfirst decomp}
\end{align}
We set $\frac{n-(1-(-1)^n)\frac{1}{2}}{2}=k$ in \eqref{progressfirst decomp} and write 
$$
2^k-1=(2^{\frac{k-(1-(-1)^k)\frac{1}{2}}{2}}-1)(2^{\frac{k-(1-(-1)^k\frac{1}{2})}{2}}+1)+\frac{(1-(-1)^k)}{2}(2^{k-(1-(-1)^k)\frac{1}{2}}).
$$ 
We can recover the general factorization of $2^k-1$ from this identity according to the parity of the exponent $k$. In particular, if $k\equiv 0\pmod 2$, then we have 
$$
2^k-1=(2^{\frac{k}{2}}-1)(2^{\frac{k}{2}}+1)
$$
and 
$$
2^k-1=(2^{\frac{k-1}{2}}-1)(2^{\frac{k-1}{2}}+1)+2^{k-1}
$$ 
if $k\equiv 1\pmod 2$. Combining both cases, we get
$$
\ell(2^k-1)\leq \ell((2^{\frac{k-(1-(-1)^k)\frac{1}{2}}{2}}-1)(2^{\frac{k-(1-(-1)^k\frac{1}{2})}{2}}+1))+2
$$ 
obtained by constructing an addition chain that leads to $2^{k-1}-1$, adding $2^{k-1}-1$ to $2^{k-1}-1$, adding $1$ and adjoining the result in the case $k\equiv 1\pmod 2$. Applying the lemma \ref{Alfred Braurer}, we further deduce
\begin{align}
\ell(2^k-1)&\leq \ell(2^{\frac{k-(1-(-1)^k)\frac{1}{2}}{2}}-1)+\ell(2^{\frac{k-(1-(-1)^k)\frac{1}{2}}{2}}+1)+2\nonumber \\&=\ell(2^{\frac{n}{4}-(1-(-1)^n)\frac{1}{8}-(1-(-1)^k)\frac{1}{4}}-1)+\ell(2^{\frac{n}{4}-(1-(-1)^n)\frac{1}{8}-(1-(-1)^k)\frac{1}{4}}+1)+2.\label{progresssecond decomp}
\end{align}
Inserting \eqref{progresssecond decomp} into \eqref{progressfirst decomp}, we get
\begin{align}
\ell(2^n-1)&\leq \ell(2^{\frac{n}{4}-(1-(-1)^n)\frac{1}{8}-(1-(-1)^k)\frac{1}{4}}-1)+\ell(2^{\frac{n}{4}-(1-(-1)^n)\frac{1}{8}-(1-(-1)^k)\frac{1}{4}}+1)+2\nonumber \\&+\ell(2^{\frac{n-(1-(-1)^n)\frac{1}{2}}{2}}+1)+2.\label{progresscombined}
\end{align}
We iterate the factorization to frequency $s$ to obtain 
\begin{align}
\ell(2^n-1)&\leq \ell(2^{\frac{n-(1-(-1)^n)\frac{1}{2}}{2}}+1)+2+\ell(2^{\frac{n}{4}-(1-(-1)^n)\frac{1}{8}-(1-(-1)^k)\frac{1}{4}}+1)+2\nonumber \\&+\cdots+\ell(2^{\frac{n}{2^s}-\xi(n,s)}-1)+\ell(2^{\frac{n}{2^s}-\xi(n,s)}+1)+2\label{progresslength bound}
\end{align}
where $0\leq \xi(n,s)<1$ for an integer $2\leq s:=s(n)$ fixed to be chosen later. For example, 
$$
\xi(n,1)=(1-(-1)^n)\frac{1}{4}<1
$$ 
and 
$$
\xi(n,2)=(1-(-1)^n)\frac{1}{8}+(1-(-1)^k)\frac{1}{4}<1
$$ 
with
$$
k:=\frac{n-(1-(-1)^n)\frac{1}{2}}{2}
$$ 
and so on. That is, $\xi(n,1):=\{\frac{n}{2}\}$ with $\xi(n,2)=\{\frac{1}{2}\lfloor \frac{n}{2}\rfloor\}$ and so on. The function $\xi(n,s)$ for values of $s\geq 3$ can be read from the exponents of the terms arising from the iteration process. We deduce from \eqref{progresslength bound}
\begin{align}
\ell(2^n-1)&\leq \sum \limits_{v=1}^{s}\frac{n}{2^v}+3s-\theta(n,s)+\ell(2^{\frac{n}{2^s}-\xi(n,s)}-1)\nonumber \\&=n(1-\frac{1}{2^{s}})+3s-\theta(n,s)+\ell(2^{\frac{n}{2^s}-\xi(n,s)}-1)\label{progressmajor inequality}
\end{align}
for some fixed $0\leq \theta(n,s):=\sum \limits_{j=1}^{s}\xi(n,j)$ and $2\leq s:=s(n)$ fixed, an integer to be chosen later. It is worth noting that 
$$
\theta(n,s):=\sum \limits_{j=1}^{s}\xi(n,j)=0
$$ 
if $n=2^r$ for some $r\in \mathbb{N}$ since $\xi(n,j)=0$ for each $1\leq j\leq s$ for all $n$ which are powers of $2$. It is also important to note that the $2s$ term is obtained by noting that there are at most $s$ terms with odd exponents under the iteration process and each term with odd exponent contributes $2$, and the other $s$ term comes from summing $1$ with frequency $s$ finding the total length of the short addition chains producing numbers of the form $2^v+1$. Now, we set $k=\frac{n}{2^s}-\xi(n,s)$ and construct the addition chain that leads to $2^{k}$ 
$$
1,2,2^2,\ldots, 2^{k-1},2^k
$$ 
with the corresponding partition sequence
\begin{align}
2=1+1,2+2=2^2,2^2+2^2=2^3\ldots,2^{k-1}=2^{k-2}+2^{k-2},2^k=2^{k-1}+2^{k-1}\nonumber
\end{align}
with $a_i=2^{i-2}=r_i$ for $2\leq i\leq k+1$, where $a_i$ and $r_i$ denote the determiner and regulator of the $i^{th}$ generator of the chain. We consider only the complete sub-addition chain 
\begin{align}
2=1+1,2+2=2^2,2^2+2^2=2^3,\ldots,2^{k-1}=2^{k-2}+2^{k-2}\nonumber
\end{align}
and extend this complete sub-addition chain by adjoining the sequence 
\begin{align}
2^{k-1}+2^{\lfloor\frac{k-1}{2}\rfloor},2^{k-1}+2^{\lfloor\frac{k-1}{2}\rfloor}+2^{\lfloor\frac{k-1}{2^2}\rfloor},\ldots,2^{k-1}+2^{\lfloor\frac{k-1}{2}\rfloor}+2^{\lfloor\frac{k-1}{2^2}\rfloor}+\cdots+2^1.\nonumber
\end{align}
Since $\xi(n,s)=0$ if $n=2^r$ and $0\leq \xi(n,s)<1$ if $n\neq 2^{r}$, we note that the adjoined sequence contributes at most 
\begin{align}
\left\lfloor\frac{\log k}{\log 2}\right\rfloor=\left\lfloor \frac{\log (\frac{n}{2^s}-\xi(n,s))}{\log 2}\right\rfloor=\left\lfloor \frac{\log n-s\log 2}{\log 2}\right\rfloor=\left\lfloor\frac{\log n}{\log 2}\right\rfloor-s\nonumber
\end{align}
terms to the original complete sub-addition chain. Due to the inequality
\begin{align}
2^{k-1}+2^{\lfloor \frac{k-1}{2}\rfloor}+2^{\lfloor \frac{k-1}{2^2}\rfloor}+\cdots+2^1&<\sum \limits_{i=1}^{k-1}2^i\nonumber \\&=2^k-2\nonumber
\end{align}
we insert terms into the sum
\begin{align}
2^{k-1}+2^{\lfloor \frac{k-1}{2}\rfloor}+2^{\lfloor \frac{k-1}{2^2}\rfloor}+\cdots+2^1\label{progressbroken road}
\end{align}
so that
\begin{align}
\sum \limits_{i=1}^{k-1}2^i=2^k-2.\nonumber 
\end{align}
We analyze the cost of filling in the missing terms of the underlying sum. We note that we have to insert $2^{k-2}+2^{k-3}+\cdots +2^{\lfloor \frac{k-1}{2}\rfloor+1}$ into \eqref{progressbroken road} and the number of terms adjoined is at most  
\begin{align}
k-2-\left\lfloor\frac{k-1}{2}\right\rfloor.\nonumber
\end{align}
The last term of the adjoined sequence is
\begin{align}
2^{k-1}+(2^{k-2}+2^{k-3}+\cdots +2^{\lfloor\frac{k-1}{2}\rfloor+1})+2^{\lfloor\frac{k-1}{2}\rfloor}+2^{\lfloor\frac{k-1}{2^2}\rfloor}+\cdots+2^1.\label{progressbroken road 1}
\end{align}
Again, we insert $2^{\lfloor \frac{k-1}{2}\rfloor-1}+\cdots+2^{\lfloor \frac{k-1}{2^2}\rfloor+1}$ into \eqref{progressbroken road 1} and the number of terms adjoined is at most 
\begin{align}
\left\lfloor\frac{k-1}{2}\right\rfloor-\left\lfloor\frac{k-1}{2^2}\right\rfloor-1\nonumber
\end{align}
The last term of the adjoined sequence is
\begin{align}
2^{k-1}+(2^{k-2}+2^{k-3}+\cdots+2^{\lfloor\frac{k-1}{2}\rfloor+1})+2^{\lfloor\frac{k-1}{2}\rfloor}+(2^{\lfloor \frac{k-1}{2}\rfloor-1}+\cdots+2^{\lfloor\frac{k-1}{2^2}\rfloor+1})+2^{\lfloor\frac{k-1}{2^2}\rfloor}+\nonumber \\ \cdots+2^1.\label{progressbroken road 2}
\end{align}
Iterating the process, we insert in the immediately previous term by inserting into \eqref{progressbroken road 2} and the number of terms adjoined is at most
\begin{align}
\left\lfloor\frac{k-1}{2^{j}}\right\rfloor-\left\lfloor\frac{k-1}{2^{j+1}}\right\rfloor-1\nonumber
\end{align}
for $j\leq \lfloor\frac{\log n}{\log 2}\rfloor-s$, since we fill at most $\lfloor\frac{\log k}{\log 2}\rfloor$ blocks with $k=\frac{n}{2^s}-\xi(n,s)$. The contribution of these new terms is at most
\begin{align}
k-1-\left\lfloor\frac{k-1}{2^{\lfloor\frac{\log k}{\log 2}\rfloor}}\right\rfloor-\left\lfloor\frac{\log k}{\log 2}\right\rfloor 
\end{align}
obtained by adding the numbers in the chain 
\begin{align}
k-1-\left\lfloor\frac{k-1}{2}\right\rfloor-1 \nonumber
\end{align}
\begin{align}
\left\lfloor\frac{k-1}{2}\right\rfloor-\left\lfloor\frac{k-1}{2^2}\right\rfloor-1\nonumber
\end{align}
\begin{align}
\vdots \vdots \vdots \vdots \vdots \vdots \vdots \vdots \vdots \vdots \vdots \vdots \nonumber
\end{align}
\begin{align}
\vdots \vdots \vdots \vdots \vdots \vdots \vdots \vdots \vdots \vdots \vdots \vdots \nonumber
\end{align}
\begin{align}
\left\lfloor\frac{k-1}{2^{\lfloor\frac{\log k}{\log 2}\rfloor}}\right\rfloor-\left\lfloor\frac{k-1}{2^{\lfloor\frac{\log k}{\log 2}\rfloor+1}}\right\rfloor-1.\nonumber
\end{align}
Taking a quick book-keeping, we deduce for $k=\frac{n}{2^s}-\xi(n,s)$ the inequality
\begin{align}
\delta(2^{k}-1)&\leq k+k-1-\left\lfloor\frac{k-1}{2^{\lfloor \frac{\log k}{\log 2}\rfloor+1}}\right\rfloor-\left\lfloor\frac{\log k}{\log 2}\right\rfloor+\left\lfloor\frac{\log n}{\log 2}\right\rfloor-s\nonumber \\&\leq \frac{n}{2^{s-1}}-1-\left\lfloor\frac{\frac{n}{2^s}-\xi(n,s)-1}{2^{\lfloor \frac{\log n}{\log 2}\rfloor+1-s}}\right\rfloor-\left\lfloor\frac{\log n}{\log 2}\right\rfloor+s +\left\lfloor\frac{\log n}{\log 2}\right\rfloor-s\nonumber \\&=\frac{n}{2^{s-1}}-1-\left \lfloor \frac{\frac{n}{2^s}-\xi(n,s)-1}{2^{\lfloor \frac{\log n}{\log 2}\rfloor+1-s}}\right \rfloor.\label{progresslast analysis}
\end{align}
Plugging the inequality \eqref{progresslast analysis} into the inequalities in \eqref{progressmajor inequality} and noting that $\ell(\cdot)\leq \delta(\cdot)$, we deduce
\begin{align}
\ell(2^n-1)&\leq \sum \limits_{v=1}^{s}\frac{n}{2^v}+3s-\theta(n,s)+\ell(2^{\frac{n}{2^s}-\xi(n,s)}-1)\nonumber \\&=n(1-\frac{1}{2^{s}})+\frac{n}{2^{s-1}}-1+3s-\theta(n,s)-\left \lfloor \frac{\frac{n}{2^s}-\xi(n,s)-1}{2^{\lfloor \frac{\log n}{\log 2}\rfloor+1-s}}\right \rfloor \\&=n-1+\frac{n}{2^s}+3s-\theta(n,s)-\left \lfloor \frac{\frac{n}{2^s}-\xi(n,s)-1}{2^{\lfloor \frac{\log n}{\log 2}\rfloor+1-s}}\right \rfloor.\nonumber
\end{align}
Taking $2\leq s:=s(n)$ such that $s=\lfloor\frac{\log n}{\log 2}\rfloor$, we get 
$$
\left \lfloor \frac{\frac{n}{2^s}-\xi(n,s)-1}{2^{\lfloor \frac{\log n}{\log 2}\rfloor+1-s}}\right\rfloor=0
$$ 
and obtain
$$\ell(2^n-1)\leq n-1-\theta \left(n,\left\lfloor\frac{\log n}{\log 2}\right\rfloor\right)+2+3\left\lfloor\frac{\log n}{\log 2}\right\rfloor
$$ 
for $\theta(n,\lfloor \frac{\log n}{\log 2}\rfloor):=\sum \limits_{j=1}^{\lfloor \frac{\log n}{\log 2}\rfloor}\xi(n,j)$ with $n>4$. 
\end{proof}
\bigskip

In the sequel, we prove a much more general inequality. This inequality would be useful for our subsequent investigations of the Scholz conjecture, which comes from the general inequality developed in the proof of Theorem \ref{exceptional scholz conjecture}.

\begin{proposition}
There exists an $s$ with $\lfloor\frac{\log n}{\log 2}\rfloor \geq s:=s(n)\geq 2$ such that 
$$
\ell(2^n-1)\leq n-1+\left(3s-\sum \limits_{j=1}^{s}\{\frac{n}{2^j}\}\right)+\ell \left(\left\lfloor\frac{n}{2^s}\right\rfloor\right)
$$ 
for all $n\geq 2$, where $\{\cdot\}$ and $\lfloor\cdot\rfloor$ denote the fractional part and the integer part of any real number $\cdot$.
\end{proposition}

\begin{proof}
We recall the general inequality developed in the proof of Theorem \ref{exceptional scholz conjecture} 
\begin{align}
\ell(2^n-1)&\leq \sum \limits_{v=1}^{s}\frac{n}{2^v}+3s-\theta(n,s)+\ell(2^{\frac{n}{2^s}-\xi(n,s)}-1)\nonumber \\&=n(1-\frac{1}{2^{s}})+3s-\theta(n,s)+\ell(2^{\frac{n}{2^s}-\xi(n,s)}-1)\label{corollarymajor inequality}
\end{align}
for some $0\leq \theta(n,s):=\sum \limits_{j=1}^{s}\xi(n,j)$ and fixed $s$ with $2\leq s:=s(n)$. We check $\xi(n,j)=\{\frac{n}{2^j}\}$. It is known that the Scholz conjecture is true for all exponents up to $5784688$ and it is also true infinitely often so that we can choose an integer $s\geq 2$ such that the exponent
$$
2^{\frac{n}{2^s}-\xi(n,s)}-1=2^{\frac{n}{2^s}-\{\frac{n}{2^s}\}}-1
$$ 
satisfies the Scholz conjecture. We can choose $s:=s(n)\geq 2$ such that the inequality
\begin{align}
\ell(2^{\frac{n}{2^s}-\xi(n,s)}-1)&=\ell(2^{\lfloor\frac{n}{2^s}\rfloor}-1)\leq \left\lfloor\frac{n}{2^s}\right\rfloor-1+\ell \left(\left\lfloor\frac{n}{2^s}\right\rfloor\right)\nonumber \\&=\frac{n}{2^s}-\{\frac{n}{2^s}\}-1+\ell \left(\left\lfloor\frac{n}{2^s}\right\rfloor\right)\label{corollaryinclude}
\end{align}
so that by plugging \eqref{corollaryinclude} into \eqref{corollarymajor inequality}, the claimed inequality follows as a consequence.
\end{proof}

\section{Carry and carry analysis}

We devote this section to the study of the concept of carry and its number theoretic properties. It turns out that this notion plays an important role in controlling the length of an addition for numbers of the form $2^n-1$. Short addition chains with small carries almost satisfy the Scholz conjecture.

\begin{definition}
We write 
$$
2^n-1=(2^{\lfloor \frac{n}{2}\rfloor}-1)(2^{\lfloor \frac{n}{2}\rfloor }+1)+\frac{(1-(-1)^n)}{2}(2^{n-(1-(-1)^n)\frac{1}{2}})
$$ 
for $n\geq 2$. The non-zero remainder 
$$
\eta(2^n-1):=\frac{(1-(-1)^n)}{2}(2^{n-(1-(-1)^n)\frac{1}{2}})
$$ 
is the level one carry of $2^n-1$. We say that $2^n-1$ is free of level one carries if $\eta(2^n-1)=0$. Setting 
$$
m=\left\lfloor\frac{n}{2}\right\rfloor
$$ 
we write
$$
2^m-1=(2^{\lfloor\frac{m}{2}\rfloor}-1)(2^{\lfloor\frac{m}{2}\rfloor}+1)+\frac{(1-(-1)^m)}{2}(2^{m-(1-(-1)^m)\frac{1}{2}})
$$ 
and denote the carry with 
$$
\eta(2^m-1)=\frac{(1-(-1)^m)}{2}(2^{m-(1-(-1)^m)\frac{1}{2}})
$$ 
and say that it is the level two carry of $2^n-1$ if $\eta(2^m-1)\neq 0.$ In general, we denote the level $k$ carry of $2^n-1$ by 
$$
\eta(2^r-1)=\frac{(1-(-1)^r)}{2}(2^{r-(1-(-1)^r)\frac{1}{2}})
$$ 
with 
$$
r=\left\lfloor\frac{n}{2^k}\right\rfloor.
$$
We say that $2^n-1$ is free from the level $k$ carry if $\eta(2^r-1)=0.$ The number of non-zero levels of carry of $2^n-1$ for all $1\leq k\leq \lfloor \frac{\log n}{\log 2}\rfloor$ is the \emph{degree} of carry of $2^n-1$.
\end{definition}
\bigskip

\begin{proposition}
The number $2^n-1$ \quad ($n\geq 2$) is free from the level one carry if and only if $n\equiv 0\pmod 2.$
\end{proposition}

\begin{proof}
Suppose that $2^n-1$ is free from the level one carry. This implies that 
$$
\eta(2^n-1)=\frac{(1-(-1)^n)}{2}(2^{n-(1-(-1)^n)\frac{1}{2}})=0.
$$ 
This is only possible with $(1-(-1)^n)=0$ and when $n\equiv 0\pmod 2.$ Conversely, suppose that $n\equiv 0\pmod 2$. This implies $\frac{n}{2}\in \mathbb{N}$ and we can write 
$$
2^n-1=(2^{\frac{n}{2}}-1)(2^{\frac{n}{2}}+1)
$$ 
and deduce
$$
\eta(2^n-1)=0.
$$
\end{proof}
\bigskip

Integers of the form $2^n-1$ with high degrees of carry serve as an obstruction to achieving the inequality 
$$
\ell(2^n-1)\leq n-1+\ell(n)
$$ 
using our current method. In the best case, avoiding them can lead to progress on the conjecture using the current method but only for a specialized set of integers of the form $2^n-1$ with low degrees of carry. It turns out that the nature of the exponents in large part characterizes integers with high degree~(resp.~low degree) carries. The encounter of integers of the form $2^n-1$ with exponents giving rise to high degree carries can be controlled in a way to minimize the corresponding length of the addition chain. At the moment, we can obtain a chain of small length for numbers $2^n-1$ with exponents that give rise to low degree carries.

\begin{theorem}\label{exceptional scholz conjecture 1}
If $2^n-1$ has carry of degree at most 
$$
\kappa(2^n-1)=\frac{1}{2(1+c)}\left\lfloor \frac{\log n}{\log 2}\right\rfloor-1
$$ 
for a fixed $c>0$, then
$$
\ell(2^n-1)\leq n-1+\left(1+\frac{1}{1+c}\right)\left\lfloor\frac{\log n}{\log 2}\right\rfloor
$$ 
for all $n\in \mathbb{N}$ with $n\geq 4$, where $\ell(\cdot)$ denotes the length of the shortest addition chain that leads to $\cdot$.
\end{theorem}

\begin{proof}
  For a fixed $c>0$ assume that $2^n-1$ has at most 
  $$
  \frac{1}{2(1+c)}\left\lfloor\frac{\log n}{\log 2}\right\rfloor-1
  $$ 
  degrees of carry. We write 
  $$
  2^n-1=(2^{\lfloor\frac{n}{2}\rfloor}-1)(2^{\lfloor \frac{n}{2}\rfloor }+1)+\eta(2^n-1)
  $$ 
  where 
  $$
  \eta(2^n-1):=\frac{(1-(-1)^n)}{2}(2^{n-(1-(-1)^n)\frac{1}{2}})
  $$ 
  is the level one carry of $2^n-1.$ We can recover the general factorization of $2^n-1$ from this identity according to the parity of the exponent $n$. In particular, if $n\equiv 0\pmod 2$, then
  $$
  2^n-1=(2^{\frac{n}{2}}-1)(2^{\frac{n}{2}}+1)
  $$ 
  and 
  $$
  2^n-1=(2^{\frac{n-1}{2}}-1)(2^{\frac{n-1}{2}}+1)+2^{n-1}
  $$ 
  if $n\equiv 1\pmod 2$. Combining both cases, we obtain
  $$
  \ell(2^n-1)\leq \ell \left((2^{\lfloor\frac{n}{2}\rfloor}-1)(2^{\lfloor\frac{n}{2}\rfloor}+1)\right)+\eta(2^n-1).$$ Applying the lemma \ref{Alfred Braurer}, we further deduce
\begin{align}
\ell(2^n-1)\leq \ell(2^{\lfloor\frac{n}{2}\rfloor }-1)+\ell(2^{\lfloor\frac{n}{2}\rfloor}+1)+\eta(2^n-1)\label{firstcanalyfirst decomp}
\end{align}
We set $\lfloor \frac{n}{2}\rfloor=k$ in \eqref{firstcanalyfirst decomp} and write 
$$
2^k-1=(2^{\lfloor \frac{k}{2}\rfloor }-1)(2^{\lfloor \frac{k}{2}\rfloor }+1)+\eta(2^k-1)
$$ 
where 
$$
\eta(2^k-1)=\frac{(1-(-1)^k)}{2}(2^{k-(1-(-1)^k)\frac{1}{2}})
$$ 
is the carry of $2^k-1$. We can recover the general factorization of $2^k-1$ from this identity according to the parity of the exponent $k$. In particular, if $k\equiv 0\pmod 2$, then 
$$
2^k-1=(2^{\frac{k}{2}}-1)(2^{\frac{k}{2}}+1)
$$ 
and 
$$
2^k-1=(2^{\frac{k-1}{2}}-1)(2^{\frac{k-1}{2}}+1)+2^{k-1}
$$ 
if $k\equiv 1\pmod 2$. Combining both cases, we obtain
$$
\ell(2^k-1)\leq \ell((2^{\lfloor\frac{k}{2}\rfloor}-1)(2^{\lfloor \frac{k}{2}\rfloor}+1))+\eta(2^k-1).
$$
Applying the lemma \ref{Alfred Braurer}, we deduce
\begin{align}
\ell(2^k-1)&\leq \ell(2^{\lfloor\frac{k}{2}\rfloor }-1)+\ell(2^{\lfloor\frac{k}{2}\rfloor}+1)+\eta(2^k-1)\nonumber \\&=\ell(2^{\lfloor\frac{1}{2}\lfloor\frac{n}{2}\rfloor \rfloor}-1)+\ell(2^{\lfloor\frac{1}{2}\lfloor\frac{n}{2}\rfloor \rfloor}+1)+\eta(2^{\lfloor \frac{n}{2}\rfloor}-1)\label{firstcanalysecond decomp}
\end{align}
so that by inserting \eqref{firstcanalysecond decomp} into \eqref{firstcanalyfirst decomp}, we get
\begin{align}
\ell(2^n-1)&\leq \ell(2^{\lfloor\frac{1}{2}\lfloor\frac{n}{2}\rfloor\rfloor}-1)+\ell(2^{\lfloor\frac{1}{2}\lfloor\frac{n}{2}\rfloor\rfloor}+1)+\eta(2^{\lfloor \frac{n}{2}\rfloor}-1)\nonumber \\&+\ell(2^{\lfloor \frac{n}{2}\rfloor }+1)+\eta(2^n-1).\label{firstcanalycombined}
\end{align}
We iterate the factorization to frequency $s$ to obtain 
\begin{align}
\ell(2^n-1)&\leq \ell(2^{\lfloor\frac{n}{2}\rfloor }+1)+\eta(2^n-1)+\ell(2^{\lfloor\frac{1}{2}\lfloor\frac{n}{2}\rfloor \rfloor}-1)+\ell(2^{\lfloor\frac{1}{2}\lfloor\frac{n}{2}\rfloor \rfloor}+1)+\eta(2^{\lfloor \frac{n}{2}\rfloor}-1)\nonumber \\&+\cdots+\ell(2^{\frac{n}{2^s}-\xi(n,s)}-1)+\ell(2^{\frac{n}{2^s}-\xi(n,s)}+1)+\eta(2^{\lfloor\frac{n}{2^{s-1}}\rfloor}-1)\label{firstcanalylength bound}
\end{align}
where $0\leq \xi(n,s)<1$ for an integer $2\leq s:=s(n)$ fixed to be chosen later. For example
$$
\xi(n,1)=(1-(-1)^n)\frac{1}{4}<1
$$ 
and 
$$
\xi(n,2)=(1-(-1)^n)\frac{1}{8}+(1-(-1)^k)\frac{1}{4}<1
$$ 
with 
$$
k:=\left\lfloor\frac{n}{2}\right\rfloor
$$ 
and so on. That is, $\xi(n,1):=\{\frac{n}{2}\}$ with $\xi(n,2)=\{\frac{1}{2}\lfloor\frac{n}{2}\rfloor\}$ and so on. The function $\xi(n,s)$ for values of $s\geq 3$ can be read from exponents of the terms arising from the iteration process. We deduce from \eqref{firstcanalylength bound} the inequality 
\begin{align}
\ell(2^n-1)&\leq \sum\limits_{v=1}^{s}\frac{n}{2^v}+s+2\sum \limits_{j=1}^{s}\sum\limits_{\substack{\eta(2^m-1)\neq 0\\m=\lfloor\frac{n}{2^{j-1}}\rfloor}}1-\theta(n,s)+\ell(2^{\frac{n}{2^s}-\xi(n,s)}-1)\nonumber \\&=n(1-\frac{1}{2^{s}})+s+2\sum \limits_{j=1}^{s}\sum \limits_{\substack{\eta(2^m-1)\neq 0\\m=\lfloor \frac{n}{2^{j-1}}\rfloor }}1-\theta(n,s)+\ell(2^{\frac{n}{2^s}-\xi(n,s)}-1)\label{firstcanalymajor inequality}
\end{align}
where the term 
$$
\sum \limits_{j=1}^{s}\sum \limits_{\substack{\kappa(2^m-1)\neq 0\\m=\lfloor \frac{n}{2^{j-1}}\rfloor }}1
$$ 
counts the number of all carry of $2^n-1$ up to level $s$ and $0\leq \theta(n,s):=\sum \limits_{j=1}^{s}\xi(n,j)$ and $2\leq s:=s(n)$ is a fixed integer to be chosen later. We note that 
$$
\theta(n,s):=\sum \limits_{j=1}^{s}\xi(n,j)=0
$$ 
if $n=2^r$ for some $r\in \mathbb{N}$, since $\xi(n,j)=0$ for each $1\leq j\leq s$ for all $n$ which are powers of $2$. The $2s$ term is obtained by noting that there are at most $s$ terms with odd exponents under the iteration process and each term with odd exponent contributes $2$, and the other $s$ term comes from summing $1$ with frequency $s$ finding the total length of the short addition chains that lead numbers of the form $2^v+1$. Now, we set $k=\frac{n}{2^s}-\xi(n,s)$ and construct the addition chain producing $2^{k}$ 
$$
1,2,2^2,\ldots,2^{k-1},2^k
$$ 
with the corresponding partition sequence
\begin{align}
2=1+1,2+2=2^2,2^2+2^2=2^3\ldots,2^{k-1}=2^{k-2}+2^{k-2},2^k=2^{k-1}+2^{k-1}\nonumber
\end{align}
with $a_i=2^{i-2}=r_i$ for $2\leq i\leq k+1$, where $a_i$ and $r_i$ denote the determiner and regulator of the $i^{th}$ generator of the chain. We consider only the complete sub-addition chain 
\begin{align}
2=1+1,2+2=2^2,2^2+2^2=2^3,\ldots,2^{k-1}=2^{k-2}+2^{k-2}\nonumber
\end{align}
and extend this complete sub-addition chain by adjoining the sequence 
\begin{align}
2^{k-1}+2^{\lfloor\frac{k-1}{2}\rfloor},2^{k-1}+2^{\lfloor\frac{k-1}{2}\rfloor}+2^{\lfloor\frac{k-1}{2^2}\rfloor},\ldots,2^{k-1}+2^{\lfloor\frac{k-1}{2}\rfloor}+2^{\lfloor\frac{k-1}{2^2}\rfloor}+\cdots+2^1.\nonumber
\end{align}
Since $\xi(n,s)=0$ if $n=2^r$ and $0\leq \xi(n,s)<1$ if $n\neq 2^{r}$, we note that the adjoined sequence contributes at most 
\begin{align}
\left\lfloor\frac{\log k}{\log 2}\right\rfloor=\left\lfloor\frac{\log (\frac{n}{2^s}-\xi(n,s))}{\log 2}\right\rfloor=\left\lfloor\frac{\log n-s\log 2}{\log 2}\right\rfloor=\left\lfloor\frac{\log n}{\log 2}\right\rfloor-s\nonumber
\end{align}
terms to the original complete sub-addition chain. Due to the inequality
\begin{align}
2^{k-1}+2^{\lfloor\frac{k-1}{2}\rfloor}+2^{\lfloor\frac{k-1}{2^2}\rfloor}+\cdots+2^1&<\sum\limits_{i=1}^{k-1}2^i\nonumber \\&=2^k-2\nonumber
\end{align}
we insert terms into the sum
\begin{align}
2^{k-1}+2^{\lfloor \frac{k-1}{2}\rfloor}+2^{\lfloor \frac{k-1}{2^2}\rfloor}+\cdots+2^1\label{firstcanalybroken road}
\end{align}
so that
\begin{align}
\sum \limits_{i=1}^{k-1}2^i=2^k-2.\nonumber 
\end{align}
We analyze the cost of filling in the missing terms of the underlying sum. We insert $2^{k-2}+2^{k-3}+\cdots +2^{\lfloor \frac{k-1}{2}\rfloor+1}$ into \eqref{firstcanalybroken road} and the number of terms adjoined is at most   
\begin{align}
k-2-\left\lfloor\frac{k-1}{2}\right\rfloor.\nonumber
\end{align}
The last term of the adjoined sequence is given by 
\begin{align}
2^{k-1}+(2^{k-2}+2^{k-3}+\cdots +2^{\lfloor\frac{k-1}{2}\rfloor+1})+2^{\lfloor\frac{k-1}{2}\rfloor}+2^{\lfloor\frac{k-1}{2^2}\rfloor}+\cdots+2^1.\label{firstcanalybroken road 1}
\end{align}
Again, we insert $2^{\lfloor\frac{k-1}{2}\rfloor-1}+\cdots+2^{\lfloor\frac{k-1}{2^2}\rfloor+1}$ into \eqref{firstcanalybroken road 1} and the number of terms adjoined is at most 
\begin{align}
\left\lfloor\frac{k-1}{2}\right\rfloor-\left\lfloor\frac{k-1}{2^2}\right\rfloor-1.\nonumber
\end{align}
The last term of the adjoined sequence is
\begin{align}
2^{k-1}+(2^{k-2}+2^{k-3}+\cdots +2^{\lfloor\frac{k-1}{2}\rfloor+1})+2^{\lfloor\frac{k-1}{2}\rfloor}+(2^{\lfloor\frac{k-1}{2}\rfloor-1}+\cdots+2^{\lfloor\frac{k-1}{2^2}\rfloor+1})+2^{\lfloor\frac{k-1}{2^2}\rfloor}+\nonumber \\ \cdots+2^1.\label{firstcanalybroken road 2}
\end{align}
Iterating the process, we insert into the immediately previous term by inserting into \eqref{firstcanalybroken road 2} and the number of terms of adjoined is at most
\begin{align}
\left\lfloor\frac{k-1}{2^{j}}\right\rfloor-\left\lfloor\frac{k-1}{2^{j+1}}\right\rfloor-1\nonumber
\end{align}
for $j\leq \lfloor\frac{\log n}{\log 2}\rfloor-s$, since we are filling at most $\lfloor\frac{\log k}{\log 2}\rfloor$ blocks with $k=\frac{n}{2^s}-\xi(n,s)$. The contribution of these new terms is at most
\begin{align}
k-1-\left\lfloor\frac{k-1}{2^{\lfloor\frac{\log k}{\log 2}\rfloor}}\right\rfloor-\left\lfloor\frac{\log k}{\log 2}\right\rfloor 
\end{align}
obtained by adding the numbers in the chain 
\begin{align}
k-1-\left\lfloor\frac{k-1}{2}\right\rfloor-1 \nonumber
\end{align}
\begin{align}
\left\lfloor\frac{k-1}{2}\right\rfloor-\left\lfloor\frac{k-1}{2^2}\right\rfloor-1\nonumber
\end{align}
\begin{align}
\vdots \vdots \vdots \vdots \vdots \vdots \vdots \vdots \vdots \vdots \vdots \vdots \nonumber
\end{align}
\begin{align}
\vdots \vdots \vdots \vdots \vdots \vdots \vdots \vdots \vdots \vdots \vdots \vdots \nonumber
\end{align}
\begin{align}
\left\lfloor\frac{k-1}{2^{\lfloor\frac{\log k}{\log 2}\rfloor}}\right\rfloor-\left\lfloor\frac{k-1}{2^{\lfloor \frac{\log k}{\log 2}\rfloor+1}}\right\rfloor-1.\nonumber
\end{align}
By a quick book-keeping, we deduce with $k=\frac{n}{2^s}-\xi(n,s)$ for the length
\begin{align}
\delta(2^{k}-1)&\leq k+k-1-\left\lfloor\frac{k-1}{2^{\lfloor \frac{\log k}{\log 2}\rfloor+1}}\right\rfloor-\lfloor\frac{\log k}{\log 2}\rfloor+\left\lfloor\frac{\log n}{\log 2}\right\rfloor-s\nonumber \\&\leq \frac{n}{2^{s-1}}-1-\left\lfloor\frac{\frac{n}{2^s}-\xi(n,s)-1}{2^{\lfloor\frac{\log n}{\log 2}\rfloor+1-s}}\right\rfloor-\left\lfloor\frac{\log n}{\log 2}\right\rfloor+s +\left\lfloor\frac{\log n}{\log 2}\right\rfloor-s\nonumber \\&=\frac{n}{2^{s-1}}-1-\left\lfloor\frac{\frac{n}{2^s}-\xi(n,s)-1}{2^{\lfloor\frac{\log n}{\log 2}\rfloor+1-s}}\right \rfloor.\label{firstcanalylast analysis}
\end{align}
Plugging the inequality \eqref{firstcanalylast analysis} into the inequalities in \eqref{firstcanalymajor inequality} and noting that $\ell(\cdot)\leq \delta(\cdot)$, we obtain
\begin{align}
\ell(2^n-1)&\leq \sum\limits_{v=1}^{s}\frac{n}{2^v}+s+2\sum \limits_{j=1}^{s}\sum\limits_{\substack{\eta(2^m-1)\neq 0\\m=\lfloor \frac{n}{2^{j-1}}\rfloor}}1-\theta(n,s)+\ell(2^{\frac{n}{2^s}-\xi(n,s)}-1)\nonumber \\&=n(1-\frac{1}{2^{s}})+\frac{n}{2^{s-1}}-1+s+2\sum\limits_{j=1}^{s}\sum \limits_{\substack{\eta(2^m-1)\neq 0\\m=\lfloor\frac{n}{2^{j-1}}\rfloor}}1-\theta(n,s)-\left\lfloor\frac{\frac{n}{2^s}-\xi(n,s)-1}{2^{\lfloor\frac{\log n}{\log 2}\rfloor+1-s}}\right \rfloor \nonumber \\&=n-1+\frac{n}{2^s}+s+2\sum \limits_{j=1}^{s}\sum \limits_{\substack{\eta(2^m-1)\neq 0\\m=\lfloor \frac{n}{2^{j-1}}\rfloor}}1-\theta(n,s)-\left \lfloor \frac{\frac{n}{2^s}-\xi(n,s)-1}{2^{\lfloor\frac{\log n}{\log 2}\rfloor+1-s}}\right\rfloor\nonumber
\end{align}
where
$$
\sum\limits_{j=1}^{s}\sum\limits_{\substack{\eta(2^m-1)\neq 0\\m=\lfloor \frac{n}{2^{j-1}}\rfloor}}1
$$ 
counts the number of non-zero carries up to the $s$ level for the number $2^n-1$. Taking $2\leq s:=s(n)$ such that $s= \lfloor \frac{\log n}{\log 2}\rfloor$ which is the maximum frequency of the iteration, we get
$$
\left\lfloor\frac{\frac{n}{2^s}-\xi(n,s)-1}{2^{\lfloor\frac{\log n}{\log 2}\rfloor+1-s}}\right\rfloor=0
$$ 
and obtain 
$$
\sum\limits_{j=1}^{s}\sum\limits_{\substack{\eta(2^m-1)\neq 0\\m=\lfloor\frac{n}{2^{j-1}}\rfloor }}1\leq \frac{1}{2(1+c)}\lfloor\frac{\log n}{\log 2}\rfloor-1
$$ 
and the inequality 
$$
\ell(2^n-1)\leq n-1-\theta \left(n,\left\lfloor\frac{\log n}{\log 2}\right\rfloor\right)+\left\lfloor\frac{\log n}{\log 2}\right\rfloor+2+\frac{1}{(1+c)}\left\lfloor\frac{\log n}{\log 2}\right\rfloor-2
$$ 
for $\theta(n,\lfloor\frac{\log n}{\log 2}\rfloor):=\sum \limits_{j=1}^{\lfloor \frac{\log n}{\log 2}\rfloor}\xi(n,j)>0$ with $n\geq 4$ and the claimed inequality follows as a consequence. 
\end{proof}
\bigskip

Now, we show that numbers of the form $2^n-1$ with low degree carry almost satisfy the Scholz conjecture. 

\begin{theorem}\label{almost scholz}
If $2^n-1$ has carry of degree at most 
$$
\kappa(2^n-1):=\left(\frac{1}{1+\log n}\right)\left\lfloor\frac{\log n}{\log 2}\right\rfloor-1
$$ 
then
$$
\ell(2^n-1)\leq n-1+\left(1+\frac{2}{1+\log n}\right)\left\lfloor\frac{\log n}{\log 2}\right\rfloor
$$ 
for all $n\in \mathbb{N}$ with $n\geq 4$, where $\ell(\cdot)$ denotes the length of the shortest addition chain producing $\cdot$.
\end{theorem}

\begin{proof}
Suppose that $2^n-1$ has at most 
$$
\frac{1}{(1+\log n)}\lfloor\frac{\log n}{\log 2}\rfloor-1
$$ 
degrees of carry. We write 
$$
2^n-1=(2^{\lfloor\frac{n}{2}\rfloor}-1)(2^{\lfloor\frac{n}{2}\rfloor }+1)+\eta(2^n-1)
$$ 
where 
$$
\eta(2^n-1):=\frac{(1-(-1)^n)}{2}(2^{n-(1-(-1)^n)\frac{1}{2}})
$$ 
is the level one carry of $2^n-1.$ We can recover the general factorization of $2^n-1$ from this identity according to the parity of the exponent $n$. In particular, if $n\equiv 0\pmod 2$, then
$$
2^n-1=(2^{\frac{n}{2}}-1)(2^{\frac{n}{2}}+1)
$$ 
and 
$$
2^n-1=(2^{\frac{n-1}{2}}-1)(2^{\frac{n-1}{2}}+1)+2^{n-1}
$$ 
if $n\equiv 1\pmod 2$. Combining both cases, we obtain
$$
\ell(2^n-1)\leq \ell((2^{\lfloor\frac{n}{2}\rfloor}-1)(2^{\lfloor \frac{n}{2}\rfloor}+1))+\eta(2^n-1).
$$ 
Applying the lemma \ref{Alfred Braurer}, we further obtain
\begin{align}
\ell(2^n-1)\leq \ell(2^{\lfloor \frac{n}{2}\rfloor }-1)+\ell(2^{\lfloor \frac{n}{2}\rfloor }+1)+\eta(2^n-1).\label{secondcanalyfirst decomp}
\end{align}
We set $\lfloor \frac{n}{2}\rfloor=k$ in \eqref{secondcanalyfirst decomp} and write
$$
2^k-1=(2^{\lfloor\frac{k}{2}\rfloor}-1)(2^{\lfloor\frac{k}{2}\rfloor}+1)+\eta(2^k-1)
$$ 
where 
$$
\eta(2^k-1)=\frac{(1-(-1)^k)}{2}(2^{k-(1-(-1)^k)\frac{1}{2}})
$$ 
is the carry of $2^k-1$. We can recover the general factorization of $2^k-1$ from this identity according to the parity of the exponent $k$. In particular, if $k\equiv 0\pmod 2$, then 
$$
2^k-1=(2^{\frac{k}{2}}-1)(2^{\frac{k}{2}}+1)
$$ 
and
$$
2^k-1=(2^{\frac{k-1}{2}}-1)(2^{\frac{k-1}{2}}+1)+2^{k-1}
$$ 
if $k\equiv 1\pmod 2$. Combining both cases, we get
$$
\ell(2^k-1)\leq \ell((2^{\lfloor\frac{k}{2}\rfloor}-1)(2^{\lfloor \frac{k}{2}\rfloor }+1))+\eta(2^k-1).
$$ 
Applying the lemma \ref{Alfred Braurer}, we further obtain
\begin{align}
\ell(2^k-1)&\leq \ell(2^{\lfloor\frac{k}{2}\rfloor }-1)+\ell(2^{\lfloor \frac{k}{2}\rfloor }+1)+\eta(2^k-1)\nonumber \\&=\ell(2^{\lfloor \frac{1}{2}\lfloor\frac{n}{2}\rfloor \rfloor}-1)+\ell(2^{\lfloor\frac{1}{2}\lfloor\frac{n}{2}\rfloor \rfloor}+1)+\eta(2^{\lfloor\frac{n}{2}\rfloor}-1)\label{secondcanalysecond decomp}
\end{align}
so that by inserting \eqref{secondcanalysecond decomp} into \eqref{secondcanalyfirst decomp}, we get
\begin{align}
\ell(2^n-1)&\leq \ell(2^{\lfloor\frac{1}{2}\lfloor\frac{n}{2}\rfloor\rfloor}-1)+\ell(2^{\lfloor \frac{1}{2}\lfloor\frac{n}{2}\rfloor\rfloor}+1)+\eta(2^{\lfloor\frac{n}{2}\rfloor}-1)\nonumber \\&+\ell(2^{\lfloor\frac{n}{2}\rfloor }+1)+\eta(2^n-1).\label{secondcanalycombined}
\end{align}
We iterate the factorization to frequency $s$ to obtain 
\begin{align}
\ell(2^n-1)&\leq \ell(2^{\lfloor\frac{n}{2}\rfloor }+1)+\eta(2^n-1)+\ell(2^{\lfloor\frac{1}{2}\lfloor\frac{n}{2}\rfloor \rfloor}-1)+\ell(2^{\lfloor\frac{1}{2}\lfloor\frac{n}{2}\rfloor \rfloor}+1)+\eta(2^{\lfloor\frac{n}{2}\rfloor}-1)\nonumber \\&+\cdots+\ell(2^{\frac{n}{2^s}-\xi(n,s)}-1)+\ell(2^{\frac{n}{2^s}-\xi(n,s)}+1)+\eta(2^{\lfloor \frac{n}{2^{s-1}}\rfloor}-1)\label{secondcanalylength bound}
\end{align}
where $0\leq \xi(n,s)<1$ is a fixed integer $2\leq s:=s(n)$ to be chosen later. For example
$$
\xi(n,1)=(1-(-1)^n)\frac{1}{4}<1
$$ 
and 
$$
\xi(n,2)=(1-(-1)^n)\frac{1}{8}+(1-(-1)^k)\frac{1}{4}<1
$$ 
with 
$$
k:=\left\lfloor\frac{n}{2}\right\rfloor
$$ 
and so on. That is, $\xi(n,1):=\{\frac{n}{2}\}$ with $\xi(n,2)=\{\frac{1}{2}\lfloor\frac{n}{2}\rfloor\}$ and so on. The function $\xi(n,s)$ for values of $s\geq 3$ can be read from exponents of the terms arising from the iteration process. We deduce from \eqref{secondcanalylength bound} the inequality 
\begin{align}
\ell(2^n-1)&\leq \sum\limits_{v=1}^{s}\frac{n}{2^v}+s+2\sum \limits_{j=1}^{s}\sum\limits_{\substack{\eta(2^m-1)\neq 0\\m=\lfloor \frac{n}{2^{j-1}}\rfloor }}1-\theta(n,s)+\ell(2^{\frac{n}{2^s}-\xi(n,s)}-1)\nonumber \\&=n(1-\frac{1}{2^{s}})+s+2\sum \limits_{j=1}^{s}\sum \limits_{\substack{\eta(2^m-1)\neq 0\\m=\lfloor\frac{n}{2^{j-1}}\rfloor }}1-\theta(n,s)+\ell(2^{\frac{n}{2^s}-\xi(n,s)}-1)\label{secondcanalymajor inequality}
\end{align}
where the term 
$$
\sum\limits_{j=1}^{s}\sum\limits_{\substack{\kappa(2^m-1)\neq 0\\m=\lfloor\frac{n}{2^{j-1}}\rfloor}}1
$$ 
counts the number of all carry of $2^n-1$ up to level $s$ and $0\leq \theta(n,s):=\sum\limits_{j=1}^{s}\xi(n,j)$ and $2\leq s:=s(n)$ fixed, an integer to be chosen later. We note that 
$$
\theta(n,s):=\sum\limits_{j=1}^{s}\xi(n,j)=0
$$ 
if $n=2^r$ for some $r\in \mathbb{N}$, since $\xi(n,j)=0$ for each $1\leq j\leq s$ for all $n$ that are powers of $2$. The $2s$ term is obtained by noting that there are at most $s$ terms with odd exponents under the iteration process and each term with odd exponent contributes $2$, and the other $s$ term comes from summing $1$ with frequency $s$ finding the total length of the short addition chains producing numbers of the form $2^v+1$. Now, we set $k=\frac{n}{2^s}-\xi(n,s)$ and construct the addition chain producing $2^{k}$ 
$$
1,2,2^2,\ldots,2^{k-1},2^k
$$ 
with corresponding partition sequence
\begin{align}
2=1+1,2+2=2^2,2^2+2^2=2^3\ldots,2^{k-1}=2^{k-2}+2^{k-2},2^k=2^{k-1}+2^{k-1}\nonumber
\end{align}
with $a_i=2^{i-2}=r_i$ for $2\leq i\leq k+1$, where $a_i$ and $r_i$ denote the determiner and regulator of the $i^{th}$ generator of the chain. We consider only the complete sub-addition chain 
\begin{align}
2=1+1,2+2=2^2,2^2+2^2=2^3,\ldots,2^{k-1}=2^{k-2}+2^{k-2}\nonumber
\end{align}
and extend this complete sub-addition chain by adjoining the sequence 
\begin{align}
2^{k-1}+2^{\lfloor\frac{k-1}{2}\rfloor},2^{k-1}+2^{\lfloor\frac{k-1}{2}\rfloor}+2^{\lfloor\frac{k-1}{2^2}\rfloor},\ldots,2^{k-1}+2^{\lfloor\frac{k-1}{2}\rfloor}+2^{\lfloor\frac{k-1}{2^2}\rfloor}+\cdots+2^1.\nonumber
\end{align}
Since $\xi(n,s)=0$ if $n=2^r$ and $0\leq \xi(n,s)<1$ if $n\neq 2^{r}$, we note that the adjoined sequence contributes at most 
\begin{align}
\left\lfloor\frac{\log k}{\log 2}\right\rfloor =\left\lfloor\frac{\log (\frac{n}{2^s}-\xi(n,s))}{\log 2}\right\rfloor=\left\lfloor\frac{\log n-s\log 2}{\log 2}\right\rfloor=\left\lfloor\frac{\log n}{\log 2}\right\rfloor-s\nonumber
\end{align}
terms to the original complete sub-addition chain. Due to the inequality
\begin{align}
2^{k-1}+2^{\lfloor\frac{k-1}{2}\rfloor}+2^{\lfloor\frac{k-1}{2^2}\rfloor}+\cdots+2^1&<\sum \limits_{i=1}^{k-1}2^i\nonumber \\&=2^k-2\nonumber
\end{align}
we insert terms into the sum
\begin{align}
2^{k-1}+2^{\lfloor\frac{k-1}{2}\rfloor}+2^{\lfloor\frac{k-1}{2^2}\rfloor}+\cdots+2^1\label{secondcanalybroken road}
\end{align}
so that
\begin{align}
\sum\limits_{i=1}^{k-1}2^i=2^k-2.\nonumber 
\end{align}
We analyze the cost of filling in the missing terms of the underlying sum. We insert $2^{k-2}+2^{k-3}+\cdots +2^{\lfloor \frac{k-1}{2}\rfloor+1}$ into \eqref{secondcanalybroken road} and the number of terms adjoined is at most 
\begin{align}
k-2-\left\lfloor\frac{k-1}{2}\right\rfloor.\nonumber
\end{align}
The last term of the adjoined sequence is
\begin{align}
2^{k-1}+(2^{k-2}+2^{k-3}+\cdots+2^{\lfloor \frac{k-1}{2}\rfloor+1})+2^{\lfloor\frac{k-1}{2}\rfloor}+2^{\lfloor\frac{k-1}{2^2}\rfloor}+\cdots+2^1.\label{secondcanalybroken road 1}
\end{align}
Again, we insert $2^{\lfloor\frac{k-1}{2}\rfloor-1}+\cdots+2^{\lfloor \frac{k-1}{2^2}\rfloor+1}$ into \eqref{secondcanalybroken road 1} and the number of terms adjoined is at most
\begin{align}
\left\lfloor\frac{k-1}{2}\right\rfloor-\left\lfloor\frac{k-1}{2^2}\right\rfloor-1.\nonumber
\end{align}
The last term of the adjoined sequence is
\begin{align}
2^{k-1}+(2^{k-2}+2^{k-3}+\cdots +2^{\lfloor \frac{k-1}{2}\rfloor+1})+2^{\lfloor \frac{k-1}{2}\rfloor}+(2^{\lfloor \frac{k-1}{2}\rfloor-1}+\cdots+2^{\lfloor \frac{k-1}{2^2}\rfloor+1})+2^{\lfloor \frac{k-1}{2^2}\rfloor}+\nonumber \\ \cdots+2^1.\label{secondcanalybroken road 2}
\end{align}
Iterating the process, we insert in the immediately previous term by inserting into \eqref{secondcanalybroken road 2} and the number of terms adjoined is at most
\begin{align}
\left\lfloor\frac{k-1}{2^{j}}\right\rfloor-\left\lfloor\frac{k-1}{2^{j+1}}\right\rfloor-1\nonumber
\end{align}
for $j\leq \lfloor\frac{\log n}{\log 2}\rfloor-s$, since we are filling at most $\lfloor\frac{\log k}{\log 2}\rfloor$ blocks with $k=\frac{n}{2^s}-\xi(n,s)$. The contribution of these new terms is at most
\begin{align}
k-1-\left \lfloor\frac{k-1}{2^{\lfloor\frac{\log k}{\log 2}\rfloor}}\right\rfloor-\left\lfloor\frac{\log k}{\log 2}\right\rfloor 
\end{align}
obtained by adding the numbers in the chain 
\begin{align}
k-1-\left\lfloor\frac{k-1}{2}\right\rfloor-1 \nonumber
\end{align}
\begin{align}
\left\lfloor\frac{k-1}{2}\right\rfloor-\left\lfloor\frac{k-1}{2^2}\right\rfloor-1\nonumber
\end{align}
\begin{align}
\vdots \vdots \vdots \vdots \vdots \vdots \vdots \vdots \vdots \vdots \vdots \vdots \nonumber
\end{align}
\begin{align}
\vdots \vdots \vdots \vdots \vdots \vdots \vdots \vdots \vdots \vdots \vdots \vdots \nonumber
\end{align}
\begin{align}
\left\lfloor\frac{k-1}{2^{\lfloor\frac{\log k}{\log 2}\rfloor}}\right\rfloor-\left\lfloor\frac{k-1}{2^{\lfloor\frac{\log k}{\log 2}\rfloor+1}}\right\rfloor-1.\nonumber
\end{align}
By a quick book-keeping, we deduce with $k=\frac{n}{2^s}-\xi(n,s)$ for the length
\begin{align}
\delta(2^{k}-1)&\leq k+k-1-\left\lfloor\frac{k-1}{2^{\lfloor \frac{\log k}{\log 2}\rfloor+1}}\right\rfloor-\left\lfloor\frac{\log k}{\log 2}\right\rfloor+\left\lfloor\frac{\log n}{\log 2}\right\rfloor-s\nonumber \\&\leq \frac{n}{2^{s-1}}-1-\left \lfloor\frac{\frac{n}{2^s}-\xi(n,s)-1}{2^{\lfloor \frac{\log n}{\log 2}\rfloor+1-s}}\right\rfloor-\left\lfloor\frac{\log n}{\log 2}\right\rfloor+s +\left\lfloor\frac{\log n}{\log 2}\right\rfloor-s\nonumber \\&=\frac{n}{2^{s-1}}-1-\left\lfloor\frac{\frac{n}{2^s}-\xi(n,s)-1}{2^{\lfloor\frac{\log n}{\log 2}\rfloor+1-s}}\right \rfloor.\label{secondcanalylast analysis}
\end{align}
Plugging the inequality \eqref{secondcanalylast analysis} into the inequalities in \eqref{secondcanalymajor inequality} and noting that $\ell(\cdot)\leq \delta(\cdot)$, we obtain
\begin{align}
\ell(2^n-1)&\leq \sum\limits_{v=1}^{s}\frac{n}{2^v}+s+2\sum \limits_{j=1}^{s}\sum\limits_{\substack{\eta(2^m-1)\neq 0\\m=\lfloor \frac{n}{2^{j-1}}\rfloor}}1-\theta(n,s)+\ell(2^{\frac{n}{2^s}-\xi(n,s)}-1)\nonumber \\&=n(1-\frac{1}{2^{s}})+\frac{n}{2^{s-1}}-1+s+2\sum \limits_{j=1}^{s}\sum \limits_{\substack{\eta(2^m-1)\neq 0\\m=\lfloor \frac{n}{2^{j-1}}\rfloor }}1-\theta(n,s)-\left\lfloor\frac{\frac{n}{2^s}-\xi(n,s)-1}{2^{\lfloor\frac{\log n}{\log 2}\rfloor+1-s}}\right \rfloor \nonumber \\&=n-1+\frac{n}{2^s}+s+2\sum \limits_{j=1}^{s}\sum \limits_{\substack{\eta(2^m-1)\neq 0\\m=\lfloor\frac{n}{2^{j-1}}\rfloor}}1-\theta(n,s)-\left\lfloor \frac{\frac{n}{2^s}-\xi(n,s)-1}{2^{\lfloor \frac{\log n}{\log 2}\rfloor+1-s}}\right\rfloor\nonumber
\end{align}
where 
$$
\sum \limits_{j=1}^{s}\sum \limits_{\substack{\eta(2^m-1)\neq 0\\m=\lfloor\frac{n}{2^{j-1}}\rfloor}}1
$$ 
counts the number of carry up to the $s$ level for the number $2^n-1$. Taking $2\leq s:=s(n)$ such that $s=\lfloor\frac{\log n}{\log 2}\rfloor$ which is the maximum frequency of the iteration, we deduce 
$$
\left\lfloor\frac{\frac{n}{2^s}-\xi(n,s)-1}{2^{\lfloor\frac{\log n}{\log 2}\rfloor+1-s}}\right\rfloor=0
$$ 
and get 
$$
\sum\limits_{j=1}^{s}\sum\limits_{\substack{\eta(2^m-1)\neq 0\\m=\lfloor\frac{n}{2^{j-1}}\rfloor}}1\leq \frac{1}{2(1+c)}\left\lfloor\frac{\log n}{\log 2}\right\rfloor-1
$$ 
and the inequality 
$$
\ell(2^n-1)\leq n-1-\theta \left(n,\left\lfloor\frac{\log n}{\log 2}\right\rfloor\right)+\left\lfloor\frac{\log n}{\log 2}\right\rfloor+2+\frac{2}{(1+\log n)}\left\lfloor\frac{\log n}{\log 2}\right\rfloor-2
$$ 
for $\theta(n,\lfloor \frac{\log n}{\log 2}\rfloor):=\sum \limits_{j=1}^{\lfloor \frac{\log n}{\log 2}\rfloor}\xi(n,j)>0$ with $n\geq 4$. The claimed inequality follows as a consequence. 
\end{proof}
\bigskip

It follows similarly from the proofs the following result which holds for numbers of the form $2^n-1$ with low degree of carry 
$$
\frac{1}{(1+\log\log n)}\left\lfloor\frac{\log n}{\log 2}\right\rfloor-1
$$
\bigskip

\begin{theorem}
If $2^n-1$ has carry of degree at most 
$$
\kappa(2^n-1):=\left(\frac{1}{1+\log\log n}\right)\left\lfloor \frac{\log n}{\log 2}\right\rfloor-1
$$ 
then 
$$
\ell(2^n-1)\leq n-1+\left(1+\frac{2}{1+\log \log n}\right)\left\lfloor\frac{\log n}{\log 2}\right\rfloor
$$ holds for all $n\in \mathbb{N}$ with $n\geq 4$, where $\ell(\cdot)$ denotes the length of the shortest addition chain that leads to $\cdot$.
\end{theorem}
\bigskip

\begin{theorem}
If $2^n-1$ has carry of degree at most 
$$
\kappa(2^n-1):=\left(\frac{1}{1+(\log\log n)^2}\right)\left\lfloor\frac{\log n}{\log 2}\right\rfloor-1
$$ 
then 
$$
\ell(2^n-1)\leq n-1+\left(1+\frac{2}{1+(\log\log n)^2}\right)\left\lfloor\frac{\log n}{\log 2}\right\rfloor
$$ 
for all $n\in \mathbb{N}$ with $n\geq 4$, where $\ell(\cdot)$ denotes the length of the shortest addition chain that leads to $\cdot$.
\end{theorem}
\bigskip

We obtain a more general theorem. 

\begin{theorem}\label{general result}
If $2^n-1$ has carry of degree at most 
$$
\kappa(2^n-1):=\left(\frac{1}{1+f(n)}\right)\left\lfloor\frac{\log n}{\log 2}\right\rfloor-1
$$ 
where $f(n)=o(\log n)$ with $f(n)\longrightarrow \infty$ as $n\longrightarrow \infty$, then
$$
\ell(2^n-1)\leq n-1+\left(1+\frac{2}{1+f(n)}\right)\left\lfloor\frac{\log n}{\log 2}\right\rfloor
$$ 
for all $n\in \mathbb{N}$ with $n\geq 4$, where $\ell(\cdot)$ denotes the length of the shortest addition chain that leads to $\cdot$.
\end{theorem}
\bigskip

The following chain of results obtained illustrates that to make progress on the Scholz conjecture, it suffices to study possible ways of controlling numbers of the form $2^n-1$ with high carries. In other words, the degree of carry of numbers of the form $2^n-1$ determines the quality of the upper bound for the corresponding length of the shortest addition using the current method. We end the paper using the same method with the expected number of carries for a fixed number $2^n-1$, we can obtain the stronger result:

\begin{theorem}\label{expected carries result}
If $2^n-1$ has carry of degree at most 
$$
\kappa(2^n-1)=\frac{1}{2}\left(\ell(n)-\left\lfloor\frac{\log n}{\log 2}\right\rfloor+\sum \limits_{j=1}^{\lfloor\frac{\log n}{\log 2}\rfloor}\left\{\frac{n}{2^j}\right\}\right)
$$ 
then
$$
\ell(2^n-1)\leq n+1+\sum\limits_{j=1}^{\lfloor\frac{\log n}{\log 2}\rfloor}\bigg(\left\{\frac{n}{2^j}\right\}-\xi(n,j)\bigg)+\ell(n)
$$
for all $n\in \mathbb{N}$ with $n\geq 4$, where $\ell(\cdot)$ denotes the length of the shortest addition chain producing $\cdot$, $\{\cdot\}$ denotes the fractional part of $\cdot$ and where $\xi(n,1):=\{\frac{n}{2}\}$ with $\xi(n,2)=\{\frac{1}{2}\lfloor \frac{n}{2}\rfloor\}$ and so on.
\end{theorem}

\begin{proof}
  Suppose that $2^n-1$ has at most 
  $$
  \frac{1}{2}\left(\ell(n)-\left\lfloor\frac{\log n}{\log 2}\right\rfloor+\sum \limits_{j=1}^{\lfloor\frac{\log n}{\log 2}\rfloor}\left\{\frac{n}{2^j}\right\}\right)
  $$ 
  degrees of carry. We write
  $$
  2^n-1=(2^{\lfloor\frac{n}{2}\rfloor}-1)(2^{\lfloor\frac{n}{2}\rfloor }+1)+\eta(2^n-1)
  $$
  where 
  $$
  \eta(2^n-1):=\frac{(1-(-1)^n)}{2}(2^{n-(1-(-1)^n)\frac{1}{2}})
  $$ 
  is the level one carry of $2^n-1.$ We can recover the general factorization of $2^n-1$ from this identity according to the parity of the exponent $n$. In particular, if $n\equiv 0\pmod 2$, then
  $$
  2^n-1=(2^{\frac{n}{2}}-1)(2^{\frac{n}{2}}+1)
  $$
  and 
  $$
  2^n-1=(2^{\frac{n-1}{2}}-1)(2^{\frac{n-1}{2}}+1)+2^{n-1}
  $$ 
  if $n\equiv 1\pmod 2$. Combining both cases, we obtain 
  $$
  \ell(2^n-1)\leq \ell\left((2^{\lfloor\frac{n}{2}\rfloor }-1)(2^{\lfloor\frac{n}{2}\rfloor}+1)\right)+\eta(2^n-1).
  $$ 
  Applying the lemma \ref{Alfred Braurer}, we obtain further the inequality
\begin{align}
\ell(2^n-1)\leq \ell(2^{\lfloor\frac{n}{2}\rfloor }-1)+\ell(2^{\lfloor\frac{n}{2}\rfloor}+1)+\eta(2^n-1)\label{tcanalyfirst decomp}
\end{align}
We set $\lfloor\frac{n}{2}\rfloor=k$ in \eqref{tcanalyfirst decomp} and write
$$
2^k-1=(2^{\lfloor\frac{k}{2}\rfloor}-1)(2^{\lfloor\frac{k}{2}\rfloor}+1)+\eta(2^k-1)
$$ 
where 
$$
\eta(2^k-1)=\frac{(1-(-1)^k)}{2}(2^{k-(1-(-1)^k)\frac{1}{2}})
$$ 
is the carry of $2^k-1$. We can recover the general factorization of $2^k-1$ from this identity according to the parity of the exponent $k$. In particular, if $k\equiv 0\pmod 2$, then 
$$
2^k-1=(2^{\frac{k}{2}}-1)(2^{\frac{k}{2}}+1)
$$ 
and 
$$
2^k-1=(2^{\frac{k-1}{2}}-1)(2^{\frac{k-1}{2}}+1)+2^{k-1}
$$ 
if $k\equiv 1\pmod 2$. Combining both cases, we obtain the inequality 
$$
\ell(2^k-1)\leq \ell((2^{\lfloor\frac{k}{2}\rfloor}-1)(2^{\lfloor \frac{k}{2}\rfloor}+1))+\eta(2^k-1).
$$ 
Applying the lemma \ref{Alfred Braurer}, we get
\begin{align}
\ell(2^k-1)&\leq \ell(2^{\lfloor\frac{k}{2}\rfloor }-1)+\ell(2^{\lfloor\frac{k}{2}\rfloor}+1)+\eta(2^k-1)\nonumber \\&=\ell(2^{\lfloor\frac{1}{2}\lfloor\frac{n}{2}\rfloor \rfloor}-1)+\ell(2^{\lfloor\frac{1}{2}\lfloor\frac{n}{2}\rfloor \rfloor}+1)+\eta(2^{\lfloor\frac{n}{2}\rfloor}-1)\label{tcanalysecond decomp}
\end{align}
so that by inserting \eqref{tcanalysecond decomp} into \eqref{tcanalyfirst decomp}, we obtain
\begin{align}
\ell(2^n-1)&\leq \ell(2^{\lfloor\frac{1}{2}\lfloor\frac{n}{2}\rfloor\rfloor}-1)+\ell(2^{\lfloor\frac{1}{2}\lfloor\frac{n}{2}\rfloor\rfloor}+1)+\eta(2^{\lfloor\frac{n}{2}\rfloor}-1)\nonumber \\&+\ell(2^{\lfloor\frac{n}{2}\rfloor }+1)+\eta(2^n-1).\label{tcanalycombined}
\end{align}
We iterate the factorization to frequency $s$ to obtain 
\begin{align}
\ell(2^n-1)&\leq \ell(2^{\lfloor\frac{n}{2}\rfloor }+1)+\eta(2^n-1)+\ell(2^{\lfloor\frac{1}{2}\lfloor\frac{n}{2}\rfloor \rfloor}-1)+\ell(2^{\lfloor\frac{1}{2}\lfloor\frac{n}{2}\rfloor \rfloor}+1)+\eta(2^{\lfloor\frac{n}{2}\rfloor}-1)\nonumber \\&+\cdots+\ell(2^{\frac{n}{2^s}-\xi(n,s)}-1)+\ell(2^{\frac{n}{2^s}-\xi(n,s)}+1)+\eta(2^{\lfloor \frac{n}{2^{s-1}}\rfloor}-1)\label{tcanalylength bound}
\end{align}
where $0\leq \xi(n,s)<1$ for a fixed integer $2\leq s:=s(n)$ to be chosen later. For example
$$
\xi(n,1)=(1-(-1)^n)\frac{1}{4}<1
$$ 
and 
$$
\xi(n,2)=(1-(-1)^n)\frac{1}{8}+(1-(-1)^k)\frac{1}{4}<1
$$
with 
$$
k:=\left\lfloor\frac{n}{2}\right\rfloor
$$ 
and so on. The function $\xi(n,s)$ for values of $s\geq 3$ can be read from exponents of the terms arising from the iteration process. We deduce from \eqref{tcanalylength bound} the inequality 
\begin{align}
\ell(2^n-1)&\leq \sum\limits_{v=1}^{s}\frac{n}{2^v}+s+2\sum \limits_{j=1}^{s}\sum\limits_{\substack{\eta(2^m-1)\neq 0\\m=\lfloor \frac{n}{2^{j-1}}\rfloor }}1-\theta(n,s)+\ell(2^{\frac{n}{2^s}-\xi(n,s)}-1)\nonumber \\&=n(1-\frac{1}{2^{s}})+s+2\sum \limits_{j=1}^{s}\sum \limits_{\substack{\eta(2^m-1)\neq 0\\m=\lfloor \frac{n}{2^{j-1}}\rfloor }}1-\theta(n,s)+\ell(2^{\frac{n}{2^s}-\xi(n,s)}-1)\label{tcanalymajor inequality}
\end{align}
where the term 
$$
\sum \limits_{j=1}^{s}\sum \limits_{\substack{\kappa(2^m-1)\neq 0\\m=\lfloor\frac{n}{2^{j-1}}\rfloor}}1
$$ 
counts the number of all carry of $2^n-1$ up to level $s$ and $0\leq \theta(n,s):=\sum \limits_{j=1}^{s}\xi(n,j)$ and $2\leq s:=s(n)$, a fixed integer to be chosen later. We note that 
$$
\theta(n,s):=\sum \limits_{j=1}^{s}\xi(n,j)=0
$$ 
if $n=2^r$ for some $r\in \mathbb{N}$, since $\xi(n,j)=0$ for each $1\leq j\leq s$ for all $n$ that are powers of $2$. It is also important to note that the $2s$ term is obtained by noting that there are at most $s$ terms with odd exponents under the iteration process and each term with odd exponent contributes $2$, and the other $s$ term comes from summing $1$ with frequency $s$ finding the total length of the short addition chains producing numbers of the form $2^v+1$. Now, we set $k=\frac{n}{2^s}-\xi(n,s)$ and construct the addition chain producing $2^{k}$
$$
1,2,2^2,\ldots, 2^{k-1},2^k
$$ 
with the corresponding partition sequence
\begin{align}
2=1+1,2+2=2^2,2^2+2^2=2^3,\ldots,2^{k-1}=2^{k-2}+2^{k-2},2^k=2^{k-1}+2^{k-1}\nonumber
\end{align}
with $a_i=2^{i-2}=r_i$ for $2\leq i\leq k+1$, where $a_i$ and $r_i$ denote the determiner and regulator of the $i^{th}$ generator of the chain. We consider only the complete sub-addition chain 
\begin{align}
2=1+1,2+2=2^2,2^2+2^2=2^3,\ldots,2^{k-1}=2^{k-2}+2^{k-2}\nonumber
\end{align}
and extend this complete sub-addition chain by adjoining the sequence 
\begin{align}
2^{k-1}+2^{\lfloor\frac{k-1}{2}\rfloor},2^{k-1}+2^{\lfloor\frac{k-1}{2}\rfloor}+2^{\lfloor\frac{k-1}{2^2}\rfloor},\ldots,2^{k-1}+2^{\lfloor\frac{k-1}{2}\rfloor}+2^{\lfloor\frac{k-1}{2^2}\rfloor}+\cdots+2^1.\nonumber
\end{align}
Since $\xi(n,s)=0$ if $n=2^r$ and $0\leq \xi(n,s)<1$ if $n\neq 2^{r}$, we note that the adjoined sequence contributes at most 
\begin{align}
\left\lfloor\frac{\log k}{\log 2}\right\rfloor =\left\lfloor\frac{\log (\frac{n}{2^s}-\xi(n,s))}{\log 2}\right\rfloor=\left\lfloor\frac{\log n-s\log 2}{\log 2}\right\rfloor=\left\lfloor\frac{\log n}{\log 2}\right\rfloor-s\nonumber
\end{align}
terms to the original complete sub-addition chain. Due to the inequality
\begin{align}
2^{k-1}+2^{\lfloor\frac{k-1}{2}\rfloor}+2^{\lfloor\frac{k-1}{2^2}\rfloor}+\cdots+2^1&<\sum \limits_{i=1}^{k-1}2^i\nonumber \\&=2^k-2\nonumber
\end{align}
we insert terms into the sum
\begin{align}
2^{k-1}+2^{\lfloor \frac{k-1}{2}\rfloor}+2^{\lfloor \frac{k-1}{2^2}\rfloor}+\cdots+2^1\label{tcanalybroken road}
\end{align}
so that
\begin{align}
\sum \limits_{i=1}^{k-1}2^i=2^k-2.\nonumber 
\end{align}
We analyze the cost of filling the missing terms of the underlying sum. We insert $2^{k-2}+2^{k-3}+\cdots +2^{\lfloor \frac{k-1}{2}\rfloor+1}$ into \eqref{tcanalybroken road} and the number of terms adjoined is at most  
\begin{align}
k-2-\left\lfloor\frac{k-1}{2}\right\rfloor \nonumber
\end{align}
The last term of the adjoined sequence is
\begin{align}
2^{k-1}+(2^{k-2}+2^{k-3}+\cdots+2^{\lfloor\frac{k-1}{2}\rfloor+1})+2^{\lfloor\frac{k-1}{2}\rfloor}+2^{\lfloor\frac{k-1}{2^2}\rfloor}+\cdots+2^1.\label{tcanalybroken road 1}
\end{align}
Again, we insert $2^{\lfloor\frac{k-1}{2}\rfloor-1}+\cdots+2^{\lfloor\frac{k-1}{2^2}\rfloor+1}$ into \eqref{tcanalybroken road 1} and the number of terms adjoined is at most
\begin{align}
\left\lfloor\frac{k-1}{2}\right\rfloor-\left\lfloor\frac{k-1}{2^2}\right\rfloor-1.\nonumber
\end{align}
The last term of the adjoined sequence is  
\begin{align}
2^{k-1}+(2^{k-2}+2^{k-3}+\cdots+2^{\lfloor\frac{k-1}{2}\rfloor+1})+2^{\lfloor\frac{k-1}{2}\rfloor}+(2^{\lfloor\frac{k-1}{2}\rfloor-1}+\cdots+2^{\lfloor\frac{k-1}{2^2}\rfloor+1})+2^{\lfloor\frac{k-1}{2^2}\rfloor}+\nonumber \\ \cdots+2^1.\label{tcanalybroken road 2}
\end{align}
Iterating the process, we insert in the immediately previous term by inserting into \eqref{tcanalybroken road 2} and the number of terms adjoined is at most
\begin{align}
\left\lfloor\frac{k-1}{2^{j}}\right\rfloor-\left\lfloor\frac{k-1}{2^{j+1}}\right\rfloor-1\nonumber
\end{align}
for $j\leq \lfloor\frac{\log n}{\log 2}\rfloor-s$, since we are filling at most $\lfloor\frac{\log k}{\log 2}\rfloor$ blocks with $k=\frac{n}{2^s}-\xi(n,s)$. The contribution of these new terms is at most
\begin{align}
k-1-\left\lfloor\frac{k-1}{2^{\lfloor\frac{\log k}{\log 2}\rfloor}}\right\rfloor-\left\lfloor\frac{\log k}{\log 2}\right\rfloor 
\end{align}
obtained by adding the numbers in the chain 
\begin{align}
k-1-\left\lfloor\frac{k-1}{2}\right\rfloor-1 \nonumber
\end{align}
\begin{align}
\left\lfloor\frac{k-1}{2}\right\rfloor-\left\lfloor\frac{k-1}{2^2}\right\rfloor-1\nonumber
\end{align}
\begin{align}
\vdots \vdots \vdots \vdots \vdots \vdots \vdots \vdots \vdots \vdots \vdots \vdots \nonumber
\end{align}
\begin{align}
\vdots \vdots \vdots \vdots \vdots \vdots \vdots \vdots \vdots \vdots \vdots \vdots \nonumber
\end{align}
\begin{align}
\left\lfloor\frac{k-1}{2^{\lfloor\frac{\log k}{\log 2}\rfloor}}\right\rfloor-\left\lfloor\frac{k-1}{2^{\lfloor\frac{\log k}{\log 2}\rfloor+1}}\right\rfloor-1.\nonumber
\end{align}
By a quick book-keeping, we deduce with $k=\frac{n}{2^s}-\xi(n,s)$ for the length
\begin{align}
\delta(2^{k}-1)&\leq k+k-1-\left\lfloor\frac{k-1}{2^{\lfloor \frac{\log k}{\log 2}\rfloor+1}}\right\rfloor-\left\lfloor\frac{\log k}{\log 2}\right\rfloor+\left\lfloor\frac{\log n}{\log 2}\right\rfloor-s\nonumber \\&\leq \frac{n}{2^{s-1}}-1-\left\lfloor\frac{\frac{n}{2^s}-\xi(n,s)-1}{2^{\lfloor\frac{\log n}{\log 2}\rfloor+1-s}}\right\rfloor-\left\lfloor\frac{\log n}{\log 2}\right\rfloor+s +\left\lfloor\frac{\log n}{\log 2}\right\rfloor-s\nonumber \\&=\frac{n}{2^{s-1}}-1-\left \lfloor \frac{\frac{n}{2^s}-\xi(n,s)-1}{2^{\lfloor \frac{\log n}{\log 2}\rfloor+1-s}}\right \rfloor.\label{tcanalylast analysis}
\end{align}
Plugging the inequality \eqref{tcanalylast analysis} into the inequalities in \eqref{tcanalymajor inequality} and noting that $\ell(\cdot)\leq \delta(\cdot)$, we obtain
\begin{align}
\ell(2^n-1)&\leq \sum\limits_{v=1}^{s}\frac{n}{2^v}+s+2\sum \limits_{j=1}^{s}\sum\limits_{\substack{\eta(2^m-1)\neq 0\\m=\lfloor\frac{n}{2^{j-1}}\rfloor}}1-\theta(n,s)+\ell(2^{\frac{n}{2^s}-\xi(n,s)}-1)\nonumber \\&=n(1-\frac{1}{2^{s}})+\frac{n}{2^{s-1}}-1+s+2\sum\limits_{j=1}^{s}\sum \limits_{\substack{\eta(2^m-1)\neq 0\\m=\lfloor \frac{n}{2^{j-1}}\rfloor }}1-\theta(n,s)-\left\lfloor\frac{\frac{n}{2^s}-\xi(n,s)-1}{2^{\lfloor\frac{\log n}{\log 2}\rfloor+1-s}}\right \rfloor \label{fine} \\&=n-1+\frac{n}{2^s}+s+2\sum \limits_{j=1}^{s}\sum\limits_{\substack{\eta(2^m-1)\neq 0\\m=\lfloor\frac{n}{2^{j-1}}\rfloor}}1-\theta(n,s)-\left \lfloor\frac{\frac{n}{2^s}-\xi(n,s)-1}{2^{\lfloor\frac{\log n}{\log 2}\rfloor+1-s}}\right\rfloor\nonumber
\end{align}
where
$$
\sum\limits_{j=1}^{s}\sum\limits_{\substack{\eta(2^m-1)\neq 0\\m=\lfloor \frac{n}{2^{j-1}}\rfloor}}1
$$ 
counts the number of carry up to the $s$ level for the number $2^n-1$. Taking $2\leq s:=s(n)$ such that $s= \lfloor\frac{\log n}{\log 2}\rfloor$ which is the maximum frequency of the iteration, we get
$$
\left\lfloor\frac{\frac{n}{2^s}-\xi(n,s)-1}{2^{\lfloor\frac{\log n}{\log 2}\rfloor+1-s}}\right\rfloor=0
$$ 
and
$$
\sum\limits_{j=1}^{s}\sum\limits_{\substack{\eta(2^m-1)\neq 0\\m=\lfloor\frac{n}{2^{j-1}}\rfloor}}1\leq \frac{1}{2}\left(\ell(n)-\left\lfloor\frac{\log n}{\log 2}\right\rfloor+\sum \limits_{j=1}^{\lfloor\frac{\log n}{\log 2}\rfloor}\left\{\frac{n}{2^j}\right\}\right)
$$ 
and deduce
$$
\ell(2^n-1)\leq n-1-\theta \left(n,\left\lfloor\frac{\log n}{\log 2}\right\rfloor\right)+\left\lfloor\frac{\log n}{\log 2}\right\rfloor+2+\ell(n)-\left\lfloor\frac{\log n}{\log 2}\right\rfloor+\sum\limits_{j=1}^{\lfloor\frac{\log n}{\log 2}\rfloor}\left\{\frac{n}{2^j}\right\}
$$ 
for $\theta(n,\lfloor\frac{\log n}{\log 2}\rfloor):=\sum \limits_{j=1}^{\lfloor\frac{\log n}{\log 2}\rfloor}\xi(n,j)>0$ with $n\geq 4$ and $0\leq \xi(n,j)<1$, where $\{\cdot \}$ denotes the fractional part of a real number. The claimed inequality follows as a consequence. 
\end{proof}
\bigskip

It turns out that proving that integers of the form $2^n-1$ has carry of degree at most 
$$
\kappa(2^n-1)=\frac{1}{2}\left(\ell(n)-\left\lfloor\frac{\log n}{\log 2}\right\rfloor+\sum \limits_{j=1}^{\lfloor \frac{\log n}{\log 2}\rfloor}\left\{\frac{n}{2^j}\right\}\right)
$$ 
yields the inequality 
$$
\ell(2^n-1)\leq n+1+\sum\limits_{j=1}^{\lfloor\frac{\log n}{\log 2}\rfloor}\bigg(\left\{\frac{n}{2^j}\right\}-\xi(n,j)\bigg)+\ell(n)
$$ 
for all $n\in \mathbb{N}$ as shown in the preceding proof, which is slightly short of the original conjecture. We expect the degree of carry of all integers to be of the form above since for integers of the form $n=2^k$, it matches exactly with degree. In particular, the number $2^{2^k}-1$ is always free from carry and  $$
\kappa(2^{2^k}-1)=\frac{1}{2}\left(\ell(2^k)-\left\lfloor\frac{\log 2^k}{\log 2}\right\rfloor+\sum \limits_{j=1}^{\lfloor\frac{\log n}{\log 2}\rfloor}\left\{\frac{2^k}{2^j}\right\}\right)=0.
$$ 
We make the following conjecture 

\begin{conjecture}(The carry-addition chain conjecture)
Let $n\geq 2$ and let $\kappa(\cdot)$ denote the degree of carry of $\cdot$. We have 
$$
\kappa(2^n-1)=\left\lceil\frac{1}{2}\left(\ell(n)-\left\lfloor \frac{\log n}{\log 2}\right\rfloor+\sum \limits_{j=1}^{\lfloor \frac{\log n}{\log 2}\rfloor}\left\{\frac{n}{2^j}\right\}\right)\right\rceil
$$ 
where $\lceil \cdot \rceil$ denotes the ceiling of $\cdot$.
\end{conjecture}


\begin{figure}[ht]
\centering
\makebox[\textwidth][c]{%
\begin{tikzpicture}[
    >=Latex,
    font=\large,
    body/.style={
        draw,
        very thick,
        rounded corners=8pt,
        minimum width=15.8cm,
        minimum height=8.4cm,
        fill=gray!4
    },
    port/.style={
        draw,
        very thick,
        circle,
        minimum size=1.55cm,
        inner sep=0pt,
        fill=white
    },
    panel/.style={
        draw,
        thick,
        rounded corners=5pt,
        minimum width=4.25cm,
        minimum height=2.15cm,
        align=center,
        fill=white
    },
    display/.style={
        draw,
        thick,
        rounded corners=5pt,
        minimum width=4.6cm,
        minimum height=2.15cm,
        align=center,
        fill=white
    },
    pipe/.style={->, very thick}
]

\node[body] (machine) at (0,0) {};

\node[font=\LARGE\bfseries] at (0,3.55) {Carry Analysis Machine};

\node[port] (input) at (-8.0,0) {$c$};
\node[font=\normalsize, align=center] at (-8.0,-1.35)
{degree of carry};

\node[port] (outputport) at (8.0,0) {};
\node[font=\normalsize, align=center] at (8.0,-1.35)
{bound output};

\node[panel] (dial) at (-3.95,0.15) {%
\textbf{Carry degree detector}\\[2pt]
\begin{tikzpicture}[baseline=-0.6ex,scale=0.78]
    \draw[thick] (0,0) arc (180:0:1.65);
    \draw[thick] (-1.65,0) -- (-1.35,0);
    \draw[thick] (1.65,0) -- (1.35,0);
    \draw[thick] (-1.17,0.85) -- (-0.98,0.58);
    \draw[thick] (0,1.08) -- (0,0.76);
    \draw[thick] (1.17,0.85) -- (0.98,0.58);
    \draw[very thick] (0,0) -- (0.95,0.72);
    \fill (0,0) circle (2pt);
    \node[font=\scriptsize] at (-1.65,-0.35) {low};
    \node[font=\scriptsize] at (1.65,-0.35) {high};
\end{tikzpicture}

{\small low degree $\Rightarrow$ sharper bound;\quad high degree $\Rightarrow$ weaker bound}
};

\node[panel] (chamber) at (0.05,0.15) {%
\textbf{Propagation chamber}\\[2pt]
\parbox{3.55cm}{\centering
The incoming carry is redistributed locally and converted into a controlled estimate.}
};

\node[display] (bound) at (4.95,0.15) {%
\textbf{Bound register}\\[2pt]
$\displaystyle \ell(2^n-1)\le \mathcal{B}(c)$\\[4pt]
{\small the quality of $\mathcal{B}(c)$ depends on the carry degree}
};

\draw[pipe] (input.east) -- ++(0.85,0) |- (dial.west);
\draw[pipe] (dial.east) -- (chamber.west);
\draw[pipe] (chamber.east) -- (bound.west);
\draw[pipe] (bound.east) -- ++(0.85,0) |- (outputport.west);

\node[draw, thick, rounded corners=5pt, fill=white, minimum width=13.7cm, minimum height=1.05cm, align=center]
at (0,-2.95)
{\large
A lower carry degree produces a stronger estimate for the minimal addition-chain length of $2^n-1$; a higher carry degree produces a weaker estimate.
};

\end{tikzpicture}%
}
\caption{A machine-style schematic of carry analysis as a bound-producing mechanism.}
\end{figure}

\footnote{
\par
.}%
.

\bibliographystyle{amsplain}

\end{document}